\documentclass[preprint,11pt]{elsarticle}

\usepackage{amssymb}
\usepackage{amsmath}
 \usepackage{amsthm}
\usepackage{subfigure} 
\usepackage{graphicx}
\usepackage{pinlabel}

\usepackage{fullpage}

\newtheorem{theorem}{Theorem}
\newtheorem{proposition}[theorem]{Proposition}
\newtheorem{lemma}[theorem]{Lemma}

\newtheorem{corollary}[theorem]{Corollary}

\theoremstyle{definition}
\newtheorem{definition}[theorem]{Definition}

\newtheorem{conjecture}[theorem]{Conjecture}
\newtheorem{example}[theorem]{Example}

\theoremstyle{remark}
\newtheorem{remark}[theorem]{Remark}

\newcommand{\ba}{\setminus}
\newcommand{\F}{\mathcal{F}}
\newcommand{\btu}{\bigtriangleup}

\journal{European Journal of Combinatorics}

\begin{document}

\begin{frontmatter}

\author[rhul]{Iain Moffatt}
    \ead{iain.moffatt@rhul.ac.uk}
    \author[wits]{Eunice Mphako-Banda}
    \ead{eunice.mphako-banda@wits.ac.za}
\title{Handle slides for delta-matroids}

 \address[rhul]{Department of Mathematics, Royal Holloway, University of London, Egham, Surrey, TW20 0EX, United Kingdom.}
 \address[wits]{School of Mathematics, University of the Witwatersrand,Wits 2050, Johannesburg, South Africa}

\begin{abstract}
A classic exercise in the topology of surfaces is to show that, using handle slides, every disc-band surface, or 1-vertex ribbon graph, can be put in a canonical form consisting of the connected sum of orientable loops, and either  non-orientable loops or pairs of interlaced orientable loops. Motivated by the principle that ribbon graph theory informs delta-matroid theory, we find the delta-matroid analogue of this surface classification. We show that, using a delta-matroid analogue of handle slides,  every binary delta-matroid in which the empty set is feasible can be written in a canonical form consisting of the direct sum of the delta-matroids of orientable loops, and either  non-orientable loops or pairs of interlaced orientable loops. Our delta-matroid results are compatible with the surface results in the sense that they are their ribbon graphic delta-matroidal analogues. 
\end{abstract}

\begin{keyword}
delta-matroid \sep disc-band surface \sep handle slide \sep ribbon graph
\MSC[2010] 05B35 \sep 05C10
\end{keyword}
\end{frontmatter}

\section{Overview and background}\label{s.1}
Matroid theory is often thought of as a generalisation of graph theory.   W.~Tutte famously observed  that, ``If a theorem about graphs can be expressed in terms of edges and circuits alone it probably exemplifies a more general theorem about matroids'' (see~\cite{Oxley01}). The merit of this point of view is that the more `tactile' area of graph theory can serve as a guide for matroid theory, in the sense that  results and properties for graphs can indicate what results and properties about matroids might hold.  In~\cite{CMNR1} and \cite{CMNR2}, C.~Chun et al. proposed that a similar relationship holds between topological graph theory and delta-matroid theory, writing ``If a theorem about embedded graphs can be expressed in terms of its spanning quasi-trees then it probably exemplifies a more general theorem about delta-matroids''. Taking advantage of this principle, here  we use classical results from surface topology to guide us to a classification of binary delta-matroids.

Informally, a ribbon graph is a ``topological graph'', whose vertices are discs and  edges are ribbons,  that arises from a regular neighbourhood of a graph in a surface. Formally, a \emph{ribbon graph} $G =\left(  V, E  \right)$ consists of a set of discs $V$  whose elements are  \emph{vertices}, a set of discs $E$ whose elements are \emph{edges}, and is such that (i) the vertices and edges intersect in disjoint line segments; (ii) each such line segment lies on the boundary of precisely one vertex and precisely one edge; and (iii) every edge contains exactly two such line segments. 
We note  that ribbon graphs describe exactly  cellularly embedded graphs, and  refer the reader to \cite{EMMbook}, or \cite{GT87} where they are called reduced band decompositions, for further background on ribbon graphs. A ribbon graph is \emph{non-orientable} if it contains a ribbon subgraph that is homeomorphic to a M\"obius band, and is \emph{orientable} otherwise.
A ribbon graph with exactly one vertex is called a \emph{bouquet}. 
An edge $e$ of a ribbon graph is a \emph{loop} if it is incident with exactly one vertex. A loop is \emph{non-orientable} if together with its incident vertex it forms a M\"obius band, and is \emph{orientable} otherwise.  Two loops $e$ and $f$ are \emph{interlaced} if they are met in the cyclic order $efef$ when travelling round the boundary of a vertex.
We let $B_{i,j,k}$ denote the bouquet shown in Figure~\ref{f5c} consisting of $i$ orientable loops, $j$ pairs of interlaced orientable loops, and $k$ non-orientable loops. 

A \emph{handle slide} is the move on ribbon graphs defined in Figures~\ref{f5a} and~\ref{f5b} which `slides' the end of one edge over an  edge adjacent to it in the cyclic order at a vertex. (We make no assumptions about the order that the points $1, \ldots , 6$ in the figure appear on a vertex.) A standard exercise in low-dimensional topology is to show that every bouquet  can be put into the canonical form $B_{i,j,k}$ using handle slides  (see for example, \cite{cart,crom,grif}, and note that in topology bouquets are often called disc-band surfaces).
 In fact, we can always assume that in the canonical form $B_{i,j,k}$,  one of $j$ or $k$ is zero. The following records the results of this exercise.
\begin{proposition} \label{hs} 
For each bouquet $B$ and for some $i,j,k$, there is a sequence of handle slides taking $B$ to $B_{i,j,0}$ if  $B$ is orientable,  or $B_{i,0,k}$, with $k\neq 0$, if $B$ is non-orientable. Furthermore, if some sequences of handle slides take $B$ to   $B_{i,j,k}$ and to $B_{p,q,r}$ then $i=p$, and so $B$ is taken to a unique form $B_{i,j,0}$ or $B_{i,0,k}$ by handle slides.
\end{proposition}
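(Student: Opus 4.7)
The plan is to view each bouquet as a compact surface with boundary (a so-called disc-band surface) and reduce the statement to the classical classification of such surfaces, checking that every step of the classification can be executed by handle slides. For the existence part I would induct on the number of loops. The base case is vacuous. For the inductive step I would split on the interlacement pattern of the loops at the vertex: (a) if some loop $e$ is not interlaced with any other, a short sequence of handle slides peels $e$ off as a direct summand of type $B_{1,0,0}$ or $B_{0,0,1}$; (b) otherwise I would pick an interlaced pair $e,f$ and, by sliding any third loop that meets the interval spanned by $e,f$ away from that interval, free $e,f$ from the rest so that they can be peeled off, either as a $B_{0,1,0}$ summand when both are orientable, or (using the handle-slide form of Dyck's theorem) together with a non-orientable neighbour as three non-orientable loops. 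In either case the leftover bouquet has strictly fewer loops, closing the induction at some $B_{i,j,k}$. To achieve the exact shape stated, I would finally iterate Dyck's theorem (``handle plus crosscap equals three crosscaps'') whenever $k>0$, replacing each pair of interlaced orientable loops by three non-orientable loops until $j=0$, leaving $B_{i,0,k}$ with $k\neq 0$ in the non-orientable case and $B_{i,j,0}$ in the orientable case.

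For the uniqueness of $i$, I would observe that a handle slide is an isotopy of the underlying disc-band surface, so it preserves every topological invariant of that surface, and in particular the number of boundary components. A direct enumeration shows that $B_{i,j,k}$ has exactly $i+1$ boundary components: each non-interlaced orientable loop splits one boundary curve into two, while each pair of interlaced orientable loops (a punctured torus summand) and each non-orientable loop (a M\"obius summand) leaves the boundary count unchanged. Hence, if $B$ can be reduced both to $B_{i,j,k}$ and to $B_{p,q,r}$, then $i+1=p+1$, so $i=p$. Combined with the facts that handle slides preserve orientability of the ribbon graph and the total number of loops, this pins down $j$ and $k$ in each of the two admissible canonical shapes and so gives the ``unique form'' assertion.

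I expect the main obstacle to be the explicit handle-slide bookkeeping inside the inductive step, namely exhibiting the sequences of handle slides that (i) free an interlaced pair from entanglement with a third loop and (ii) realise Dyck's theorem diagrammatically. These are standard but fiddly pictorial manipulations in low-dimensional topology rather than genuinely new content, so in a full write-up I would invoke \cite{cart,crom,grif} at these points instead of redrawing the ribbon-graph pictures.
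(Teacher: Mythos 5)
Your outline is, in substance, the standard classification argument: the paper does not actually prove Proposition~\ref{hs} but records it as a known exercise and defers to \cite{cart,crom,grif}, and the invariant you use for uniqueness (a handle slide does not change the homeomorphism type of the underlying disc-band surface, and $B_{i,j,k}$ has exactly $i+1$ boundary components) is precisely the one the paper alludes to when it says $i+1$ is ``the number of holes in the surface''. The uniqueness part of your argument, including pinning down $j$ and $k$ from orientability and the edge count once $i$ is fixed, is correct.

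There is, however, one concrete hole in the existence step as written. In your case (b), after the four ends of the interlaced pair $e,f$ have been made consecutive, you only treat the sub-cases ``both orientable'' (a $B_{0,1,0}$ summand) and ``orientable pair plus a non-orientable neighbour'' (Dyck). The case where $e$ or $f$ is \emph{itself} non-orientable is not covered, and the plan fails on it as stated: the two-loop bouquet with boundary word $efef$ and $e$ non-orientable has no non-interlaced loop, no all-orientable interlaced pair, and no third loop to borrow, yet it must reduce to $B_{0,0,2}$. You need the additional standard move ``an interlaced pair containing a non-orientable band equals two non-orientable bands'' (slide $f$ over $e$), i.e.\ the Klein-bottle relation, alongside Dyck's relation. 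Relatedly, your closing sentence (``replacing each pair of interlaced orientable loops by three non-orientable loops'') miscounts bands: the move trades an interlaced orientable pair \emph{together with one non-orientable loop} for three non-orientable loops --- this is exactly the handle-slide sequence the paper writes out explicitly in the proof of Corollary~\ref{c.1} --- though your earlier phrasing shows you intend the correct version.
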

This result is essentially the classification surfaces with boundary up to homeomorphism restricted to bouquets: $j$ is the number of tori making up the surface, $k$ the number of real projective planes, and $i+1$ is the number of holes in the surface.

\begin{figure}
\centering
\subfigure[Slide $a$ over $b$ to the right.]{
\labellist
 \small\hair 2pt
\pinlabel {$1$}  [r] at  10 17 
\pinlabel {$2$} [l]  at   55 17 
\pinlabel {$3$} [r] at    81 17 
\pinlabel {$4$}  [l] at   126 17 
\pinlabel {$5$}  [r] at    154  17 
\pinlabel {$6$}  [l] at   199 17 
\pinlabel {$a$}   at  68 41
\pinlabel {$b$}   at  144 41 

\endlabellist
\includegraphics[scale=0.8]{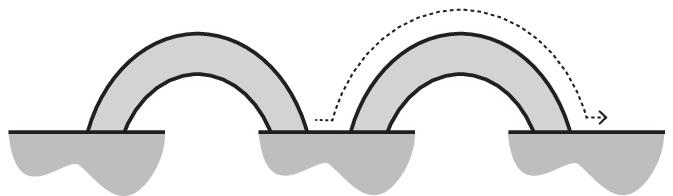}
\label{f5a}
}
\hspace{5mm}
\subfigure[Slide $a$ over $b$ to the left.]{
\labellist
 \small\hair 2pt
\pinlabel {$1$}  [r] at  10 17 
\pinlabel {$2$} [l]  at   55 17 
\pinlabel {$3$} [r] at    81 17 
\pinlabel {$4$}  [l] at   126 17 
\pinlabel {$5$}  [r] at    154  17 
\pinlabel {$6$}  [l] at   199 17 
\pinlabel {$a$}   at  110 73
\pinlabel {$b$}   at  140 41 
\endlabellist
\includegraphics[scale=.8]{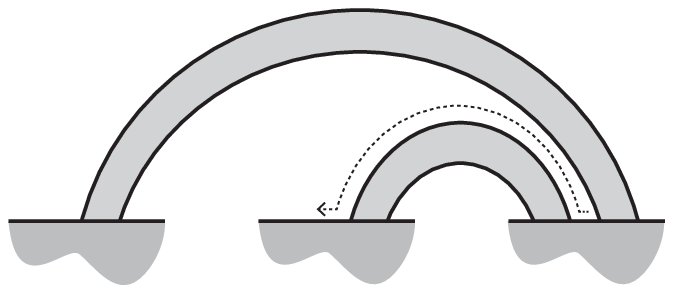}
\label{f5b}
}

\subfigure[The bouquet $B_{i,j,k}$.]{
\labellist
 \small\hair 2pt
\pinlabel {$\overset{i}{\overbrace{\quad\quad\quad\quad}}$}   at  35 67  
\pinlabel {$\overset{j}{\overbrace{\quad\quad\quad\quad\quad\quad}}$}   at  102 70 
\pinlabel {$\overset{k}{\overbrace{\quad\quad\quad\quad}}$}   at  165 67
\endlabellist
\includegraphics[scale=1]{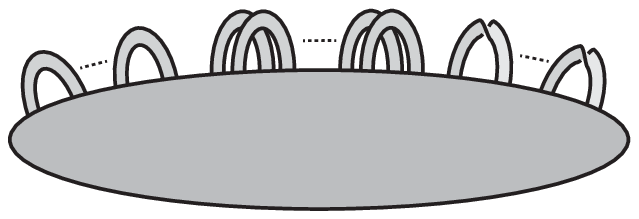}
\label{f5c}
}

\caption{Handle slides.}
\label{f5}
\end{figure}

Following  the principle of \cite{CMNR1} that ribbon graphs serve as a guide for delta-matroids, we look for the delta-matroid analogue of Proposition~\ref{hs}. Our aim is to find a classification of delta-matroids up to ``homeomorphism'' that is consistent with this surface result.

Delta-matroids, introduced by A.~Bouchet in \cite{ab1}, generalise matroids.  Recall the \emph{symmetric difference}, $X\triangle Y$,  of sets $X$ and $Y$  is $(X\cup Y)\backslash (X\cap Y)$.
A \emph{set system} is a pair $(E,\mathcal{F})$  consisting of a finite set $E$ and a collection $\mathcal{F}$ of subsets of $E$.
A \emph{delta-matroid} $D$ is a set system $(E,\mathcal{F})$ in which  $\mathcal{F}$ is non-empty and  satisfies the \emph{Symmetric Exchange Axiom}:
for all $X,Y\in \mathcal{F}$, if there is an element $u\in X\triangle Y$, then there is an element $v\in X\triangle Y$ such that $X\triangle \{u,v\}\in \mathcal{F}$. 
 Elements of $\mathcal{F}$ are called \emph{feasible sets} and $E$ is the \emph{ground set}.  We often use $\mathcal{F}(D)$ and $E(D)$ to denote the set of feasible sets and the ground set, respectively, of $D$.
 If its feasible sets are all of the same parity, $D$ is  \emph{even}, otherwise it is \emph{odd}.  
  It is not hard to see that if we impose the extra condition that the feasible sets are equicardinal, the definition of a delta-matroid becomes a reformulation of the bases definition of a matroid.
 Thus a matroid is exactly a delta-matroid whose feasible sets are all of the same size (in which case the feasible sets are the bases of the matroid). 
 
 If  $D=(E,\mathcal{F})$ and $D'=(E',\mathcal{F}')$ are delta-matroids with $E\cap E'=\emptyset$, the \emph{direct sum}, $D\oplus D'$,  of $D$ and  $D'$ is the delta-matroid with ground set 
$E\cup E'$ and feasible sets $\{F\cup F'\mid F\in \mathcal{F}\text{ and } F'\in\mathcal{F}'\}$.  We define $D_{i,j,k}$ to be the delta-matroid arising as the direct sum of $i$ copies of $(\{e\}, \{\emptyset\})$, $j$ copies of $(\{e,f\}, \{\emptyset, \{e,f\}\})$, and $k$ copies of $(\{e\}, \{\emptyset, \{e\}\})$. (Strictly speaking we sum isomorphic copies of these delta-matroids having mutually disjoint ground sets.)

Here we prove the analogue of Proposition~\ref{hs} for binary delta-matroids. 
The terms handle slide and binary delta-matroid in the theorem statement are defined in Sections~\ref{s.2} and~\ref{s.3}, respectively.
\begin{theorem}\label{t.1}
Let $D=(E,\mathcal{F})$ be a binary delta-matroid in which the empty set is feasible. Then, for some  $i,j,k$, there is a sequence of handle slides taking $D$ to $D_{i,j,0}$ if  $D$ is even,  or $D_{i,0,k}$, with $k\neq 0$, if $D$ is odd. Furthermore, if some sequences of handle slides take $D$ to   $D_{i,j,k}$ and to $D_{p,q,r}$ then $i=p$, and so $D$ is taken to a unique form $D_{i,j,0}$ or $D_{i,0,k}$ by handle slides.
\end{theorem}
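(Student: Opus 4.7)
The plan is to reduce the theorem to a classical structure theorem for symmetric matrices over $\mathbb{F}_2$. Since $\emptyset$ is feasible, $D$ admits a symmetric matrix representation (to be developed in Section~\ref{s.3}): there is a symmetric matrix $A$ over $\mathbb{F}_2$ with rows and columns indexed by $E$ such that $F\in\mathcal{F}(D)$ if and only if the principal submatrix $A[F]$ is nonsingular. Under this representation, $D$ is even exactly when $A$ has zero diagonal (so $A$ encodes an alternating bilinear form), and odd otherwise. The first step of the proof is then to verify the key dictionary: each handle slide, as defined in Section~\ref{s.2}, is realised on the matrix side by a symmetric elementary congruence $A\mapsto P^{T}AP$ with $P = I + E_{ij}$ an elementary transvection, and conversely every such congruence is realised by a handle slide. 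I expect this dictionary step to be the main obstacle; once it is in place, the rest is $\mathbb{F}_2$-linear algebra.

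Granted the dictionary, I would apply the classical classification of symmetric bilinear forms over $\mathbb{F}_2$ under congruence. Using symmetric Gaussian elimination, drive $A$ into a block-diagonal form whose blocks are copies of $(0)$, $(1)$, and the $2\times 2$ hyperbolic block $H=\bigl(\begin{smallmatrix} 0 & 1 \\ 1 & 0 \end{smallmatrix}\bigr)$. Because the operations $A\mapsto P^{T}AP$ with $P=I+E_{ij}$ preserve the property of having zero diagonal over $\mathbb{F}_2$, the alternating (even) case stays alternating and terminates at $0^{\oplus i}\oplus H^{\oplus j}$, with no $(1)$-block appearing. In the non-alternating (odd) case at least one $(1)$-block must appear in the reduced form, and then the identity $H\oplus (1)\cong (1)\oplus (1)\oplus (1)$, which is a short explicit computation of an invertible change-of-basis matrix, can be applied iteratively to absorb every hyperbolic block into identity blocks, ending at $0^{\oplus i}\oplus (1)^{\oplus k}$ with $k\geq 1$. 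Translating the blocks back through the representation dictionary, a $(0)$-block corresponds to $(\{e\},\{\emptyset\})$, an $H$-block to $(\{e,f\},\{\emptyset,\{e,f\}\})$, and a $(1)$-block to $(\{e\},\{\emptyset,\{e\}\})$, so the two canonical matrix forms become $D_{i,j,0}$ and $D_{i,0,k}$ exactly as claimed.

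For the uniqueness of $i$, observe that $i$ equals the corank $|E|-\mathrm{rank}(A)$ of the representing matrix, and rank is preserved by any congruence $A\mapsto P^{T}AP$ with $P$ invertible, hence by any sequence of handle slides; likewise, the alternating/non-alternating dichotomy, which controls whether the canonical form is $D_{i,j,0}$ or $D_{i,0,k}$, is a congruence invariant. Therefore $i$ and the parity type are invariants of $D$ itself, giving the uniqueness statement. In short, once the translation between Section~\ref{s.2}'s handle slides and symmetric elementary operations is set up carefully, the theorem becomes a controlled execution of binary symmetric diagonalisation inside delta-matroid language.
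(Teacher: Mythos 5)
Your proposal is correct and follows essentially the same route as the paper: represent $D$ by a symmetric $GF(2)$ matrix via Bouchet's result, identify a handle slide with the congruence by a transvection $I+E_{ij}$ (the paper's Theorem~\ref{con2}), block-diagonalise into $(0)$, $(1)$ and hyperbolic blocks (the paper's Lemma~\ref{t.3}), use preservation of the zero-diagonal/parity dichotomy together with $H\oplus(1)\cong(1)^{\oplus 3}$ to reach the two canonical forms, and get uniqueness of $i$ from rank invariance. The only point worth making explicit is that the full congruence classification must be realised using only transvection congruences, which holds since these generate $GL_n(GF(2))$ (and is what the paper's hands-on elimination in Lemma~\ref{t.3} accomplishes directly).
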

This theorem is the analogue of Proposition~\ref{hs} in the following sense.  Every ribbon graph gives rise to a delta-matroid, as described in Section~\ref{s.2}. If we replace each ribbon graph term in Proposition~\ref{hs} with its delta-matroid analogue, a bouquet becomes a delta-matroid in which the empty set is feasible, $D_{i,j,k}$ is the delta-matroid of $B_{i,j,k}$, we define a delta-matroid handle slide in  Section~\ref{s.2} as the analogue of a handle slide on a bouquet, being orientable becomes being even, and non-orientable becomes odd.  Thus Theorem~\ref{t.1} gives a classification of a class of  delta-matroids up to ``homeomorphism'', showing  how the interplay between  ribbon graphs and delta-matroids can be exploited to obtain structural results about delta-matroids.

Although it is an analogue, it is important to note that Theorem~\ref{t.1}  is \emph{not} a generalisation of Proposition~\ref{hs} since the latter can not be recovered from the former. (This is since, using terminology we shortly introduce, a handle slide of $a$ over $b$ may be defined for a ribbon graphic delta-matroid but not for the corresponding edges of a ribbon graph, see Remark~\ref{r.1}.)

\section{Defining handle slides for delta-matroids}\label{s.2}
In this section we determine the analogue of a handle slide for delta-matroids. We start by recalling  how a delta-matroid can be associated with a ribbon graph.  A \emph{quasi-tree} is a ribbon graph  with exactly one boundary component.  A ribbon graph $H$ is a \emph{spanning ribbon subgraph} of a ribbon graph $G=(V,E)$  if $H$ can be obtained from $G$ by deleting some of its edges (in particular, this means $V(H)=V(G)$). Abusing notation slightly, we say that a spanning ribbon subgraph $Q$ of $G$ is a \emph{spanning quasi-tree} of $G$ if $Q$ restricts to a spanning quasi-tree of each connected component of $G$.  The \emph{delta-matroid of $G$}, denoted $D(G)$, is $(E(G), \F(G))$ where $E(G)$ is the edge set of $G$ and 
\[   \F(G) =\{  F \subseteq E(G)  \mid   F \text{ is the edge set of a spanning quasi-tree of }G    \}. \]
It follows by results of Bouchet from  \cite{ab2} that $D(G)$ is a delta-matroid. (Bouchet worked in the language of transition systems and medial graphs. The framework  used here is from \cite{CMNR1}.) A delta-matroid is \emph{ribbon graphic} if it is isomorphic to the delta-matroid of a ribbon graph.
\begin{example}\label{examp1}
If $G$ is a plane graph then the spanning quasi-trees of $G$ are exactly the maximal spanning forests of $G$. Since the latter form the collection of  bases for the cycle matroid $M(G)$ of $G$ we have that for plane graphs  $D(G)=M(G)$.  Delta-matroids can therefore be viewed as the  analogue of matroids for topological graph theory (see \cite{CMNR1,CMNR2}, where this point of view was proposed,  for further  discussion on this).  A consequence of this is  that, for any ribbon graph $G$, the empty set is feasible in   $D(G)$ if and only if $G$ is a disjoint union of bouquets.
\end{example}
\begin{example}\label{examp2}
For the ribbon graphs $B_{i,j,k}$ defined in Section~\ref{s.1} and illustrated in Figure~\ref{f5c}, we have  $D(B_{i,j,k})=D_{i,j,k}$, where $D_{i,j,k}$ is also as in Section~\ref{s.1}.  
\end{example}

\begin{definition}\label{d1}
Let $D=(E,\F)$ be a set system, and $a,b\in E$ with $a\neq b$. We define $D_{ab}$ to be the set system  $(E,\F_{ab})$ where 
\[ \F_{ab}:= \F \btu \{ X\cup a \mid X\cup b \in \F \text{ and } X\subseteq E\ba \{a,b\}   \}. \]
We say that there is \emph{a sequence of handle slides taking   $D$  to  $D'$}  if $D'=(\cdots ( (D_{a_1b_1})_{a_2b_2}) \cdots )_{a_nb_n}$ for some $a_1,b_1,\ldots, a_n,b_n \in E$, and we call the move taking $D$ to $D_{ab}$ a \emph{handle slide} taking $a$ over $b$.
\end{definition}
Note that $(D_{ab})_{ab}=D$ and that  handle slides define an equivalence relation on set systems. 

\begin{example}\label{examp3} 
If $D=(E,\F)$ with  $E=\{1,2,3\}$ and $\F=\{ \{1,2,3\}, \{1,2\}, \{1,3\}, \{2,3\},\emptyset\}$, then $\F_{12}=\{ \{1,2,3\}, \{1,2\}, \{2,3\},\emptyset\}$.
\end{example}

The following theorem shows that Definition~\ref{d1} provides the delta-matroid analogue of a handle slide.
\begin{theorem}\label{t.hs}
Let $G=(V,E)$ be a ribbon graph,  $a$ and $b$ be distinct edges of $G$ with neighbouring ends, and $G_{ab}$ be the ribbon graph obtained from $G$ by handle sliding $a$ over $b$ as in Figure~\ref{f5a} to~\ref{f5b}. Then 
\[D(G_{ab})=D(G)_{ab}.\]
\end{theorem}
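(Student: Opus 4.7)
The plan is to establish the set equality $\F(G_{ab}) = \F(G)_{ab}$ pointwise: for each $F \subseteq E$, I show that $F$ is the edge set of a spanning quasi-tree of $G_{ab}$ if and only if $F \in \F(G)_{ab}$ in the sense of Definition~\ref{d1}. Since every element of the set $\{X \cup a \mid X \cup b \in \F(G),\ X \subseteq E\setminus\{a,b\}\}$ contains $a$ but not $b$, unpacking the symmetric difference shows that ``$F \in \F(G)_{ab}$'' reads as follows: if $a \notin F$ or $b \in F$, then the condition is $F \in \F(G)$; and if $a \in F$ and $b \notin F$, then the condition is that exactly one of $F$ and $F\triangle\{a,b\}$ lies in $\F(G)$. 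I then split into three cases.

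First, when $a \notin F$, the handle slide modifies only the position of edge $a$, so as ribbon subgraphs $G|_F$ and $G_{ab}|_F$ are literally identical; hence $F \in \F(G_{ab}) \iff F \in \F(G)$, matching the required condition. Second, when $\{a,b\} \subseteq F$, the neighbouring-ends condition is inherited by every spanning subgraph containing both $a$ and $b$, so $G_{ab}|_F$ is obtained from $G|_F$ by performing the same handle slide of $a$ over $b$; since handle slides preserve the homeomorphism type of a ribbon graph, the two ribbon subgraphs have the same number of boundary components on each connected component, so one is a spanning quasi-tree iff the other is, again matching the required condition.

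The remaining case $a \in F$, $b \notin F$ is the heart of the proof. Writing $H := F\setminus\{a\}$, the goal is to show that $H\cup a$ is a spanning quasi-tree of $G_{ab}$ iff exactly one of $H\cup a$ and $H\cup b$ is a spanning quasi-tree of $G$. The plan is a boundary-component computation comparing the five ribbon subgraphs
\[ G|_H,\ G|_{H\cup a},\ G|_{H\cup b},\ G|_{H\cup\{a,b\}},\ G_{ab}|_{H\cup a}. \]
Two inputs drive it. First, by applying the previous case inside the ribbon subgraph on edge set $H\cup\{a,b\}$, we have $G_{ab}|_{H\cup\{a,b\}} \cong G|_{H\cup\{a,b\}}$, so these have equal boundary-component counts. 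Second, adding or removing a single edge to or from a ribbon graph changes the boundary-component count by $0$ or $\pm 1$, in a manner determined by whether the two ends of that edge lie on the same boundary component of the surrounding subgraph and whether the attachment is orientable. The neighbouring-ends hypothesis locates the moved end of $a$ in $G_{ab}$ in a prescribed position relative to the boundary components of $G|_{H\cup b}$ and $G|_H$, and this is enough to propagate the needed XOR relation.

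The main obstacle is the book-keeping in this third case: one must enumerate the configurations of how the four ends of $a$ and $b$ distribute among the boundary components of $G|_H$, track whether the various local attachments are orientable, and verify in each configuration that the number of boundary components of $G_{ab}|_{H\cup a}$ matches what the XOR condition requires. Ensuring the exhaustiveness of the case list, and simultaneously tracking the connected-component count (since the spanning quasi-tree condition is that boundary components equal connected components on each component), is the most delicate part; each individual configuration, however, reduces to a short local calculation from the two ingredients above.
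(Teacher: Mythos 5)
Your route is essentially the paper's: both arguments reduce to a finite enumeration of how the boundary arcs of the spanning ribbon subgraph $(V,X)$, for $X\subseteq E\setminus\{a,b\}$, join up the six marked points flanking the ends of $a$ and $b$, followed by a feasibility check in each configuration. The paper does this wholesale in Table~\ref{table1}, which lists all fifteen matchings of the points $1,\ldots,6$ and records which of $X$, $X\cup a$, $X\cup b$, $X\cup\{a,b\}$ are feasible in $D(G)$ and in $D(G_{ab})$; the theorem is then read off the table. Your first two cases give clean a priori arguments (deleting $a$ commutes with the slide; when $a,b\in F$ the slide restricts to a homeomorphism of the ribbon subgraph, preserving boundary and connected components) for facts the paper simply observes row by row, and your XOR reformulation of membership in $\F(G)_{ab}$ is correct and consistent with every row of the table. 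The shortfall is that your third case, which carries all the content, is announced rather than executed, and the two inputs you name do not by themselves determine the number of boundary components of $G_{ab}|_{H\cup a}$: knowing that it agrees with that of $G|_{H\cup\{a,b\}}$ after removing one edge, hence up to a change of $0$ or $\pm 1$, still leaves the value undetermined until one traces which boundary arc of $(V,H)$ each attachment point lies on, and only then can the XOR relation be verified. That trace is exactly the paper's table, so completing your plan amounts to writing out (the relevant columns of) Table~\ref{table1}. Nothing in the plan would fail --- each configuration is indeed a short local check, and the exhaustiveness worry is settled by noting there are only the fifteen perfect matchings of six points --- but as written the decisive computation remains a sketch rather than a proof.
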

\begin{proof}
Handle slides act disjointly on direct sums of delta-matroids and on connected components of ribbon graphs. Furthermore, the delta-matroid of a disconnected ribbon graph is the direct sum of the delta-matroids of its connected components. This means that, without loss of generality, we can assume that $G$ is connected.

Every feasible set in $D(G_{ab})$ and $D(G)_{ab}$ is of the form $X$, $X\cup a$, $X\cup b$ or $X\cup\{a,b\}$ for some $X\subseteq E\ba \{a,b\}$. 
Suppose $1,\ldots, 6$ are the points on the boundary components of $G$ and $G_{ab}$ shown in Figures~\ref{f5a} and~\ref{f5b}. Each   $X\subseteq E\ba \{a,b\}$ defines spanning ribbon subgraphs of $G$ and  of $G_{ab}$. The boundary components of the spanning ribbon  subgraphs $(V,X)$ connect the points $1,\ldots, 6$ in some way. 
For each  $X\subseteq E\ba \{a,b\}$ such that at least one of $X$, $X\cup a$, $X\cup b$ or $X\cup\{a,b\}$ is feasible (i.e., defines a spanning quasi-tree), Table~\ref{table1} shows all of the ways that the points  $1,\ldots, 6$ can be connected to each other in the boundary components of the corresponding ribbon subgraphs, and whether $X$, $X\cup a$, $X\cup b$ and $X\cup\{a,b\}$ is feasible in $D(G_{ab})$ or $D(G)_{ab}$.  For example, the entry  $(13)(24)(56)$ indicates that there are arcs (13), (24), and (56) in the boundary components of the spanning ribbon subgraphs defined by $X$. In this case, assuming at least one of $X$, $X\cup a$, $X\cup b$ or $X\cup\{a,b\}$ is feasible, it must be that $X\cup b$ and $X\cup\{a,b\}$ are feasible in $D(G)$; and $X\cup a$, $X\cup b$, and $X\cup\{a,b\}$ are feasible in $D(G_{ab})$ (as all other sets will have too many boundary components). It is then readily seen from the table that $\F(G_{ab})=\F(G)_{ab}$, as required.
\end{proof}

\begin{table}
\begin{center}
\begin{tabular}{|c|c|c|}
\hline
Connection in $(V,X)$ & $\F(G)$ & $\F(G_{ab})$ \\ \hline\hline
(12)(34)(56) & $X\cup\{a,b\}$  & $X\cup\{a,b\}$ \\ \hline
(12)(35)(46) & $X\cup a$, $X\cup\{a,b\}$ & $X\cup a$, $X\cup\{a,b\}$\\ \hline
(12)(36)(45) & $X\cup a$ & $X\cup a$\\ \hline
(13)(24)(56) & $X\cup b$, $X\cup\{a,b\}$ & $X\cup a$, $X\cup b$, $X\cup\{a,b\}$\\ \hline
(13)(25)(46) & $X$, $X\cup a$, $X\cup b$, $X\cup\{a,b\}$ & $X$, $X\cup b$, $X\cup\{a,b\}$\\ \hline
(13)(26)(45) & $X$, $X\cup a$  & $X$, $X\cup a$ \\ \hline
(14)(23)(56) & $X\cup b$ & $X\cup a$, $X\cup b$ \\ \hline
(14)(25)(36) & $X$, $X\cup\{a,b\}$& $X$, $X\cup\{a,b\}$ \\ \hline
(14)(26)(35) & $X$, $X\cup b$, $X\cup\{a,b\}$ & $X$, $X\cup a$, $X\cup b$, $X\cup\{a,b\}$\\ \hline
(15)(23)(46) & $X$, $X\cup b$ & $X$, $X\cup a$, $X\cup b$ \\ \hline
(15)(24)(36) & $X$, $X\cup a$, $X\cup\{a,b\}$ & $X$, $X\cup a$, $X\cup\{a,b\}$ \\ \hline
(15)(26)(34) &  $X\cup a$, $X\cup b$, $X\cup\{a,b\}$ &  $X\cup b$, $X\cup\{a,b\}$\\ \hline
(16)(23)(45) & $X$ & $X$\\ \hline
(16)(24)(35) & $X$, $X\cup a$, $X\cup b$ & $X$, $X\cup b$\\ \hline
(16)(25)(34) & $X\cup a$, $X\cup b$ &  $X\cup b$ \\ \hline
\end{tabular}
\end{center}
\caption{A case analysis for the proof of Theorem~\ref{t.hs}.}
\label{table1}
\end{table}

\begin{remark}\label{r.1}
A key difference between handle slides of ribbon graphs and of delta-matroids is that in a ribbon graph $G_{ab}$ can be formed only if $a$ and $b$ have adjacent ends, whereas in a delta-matroid $D_{ab}$ can be formed, without restriction, for all $a,b\in E$. (A consequence of this is that Theorem~\ref{t.hs} does not show that the set of ribbon graphic delta-matroids is closed under handle slides.)
Since $G_{ab}$ can be formed with respect to only certain edges $a$ and $b$, it is natural to ask if there is a corresponding concept of  ``allowed handle slides'' in a delta-matroid. The answer is no.
 To see why consider  the orientable bouquet $B$ with cyclic  order of edges around its vertex $1a12a23b34b4$, and the orientable  bouquet $B'$ with cyclic  order of edges around its vertex $21a12ab43b34$. Then a handle slide taking $a$ over $b$ is not possible in $B$ but is possible in $B'$. However $D(B)=D(B')$. Thus you cannot tell the ``allowed'' handle slides of a ribbon graph  from its delta-matroid alone.  
\end{remark}

\begin{remark}\label{r.2}
 Proposition~\ref{hs} and Theorem~\ref{t.hs}  immediately give a version of Theorem~\ref{t.1} for ribbon graphic delta-matroids. However, this version of the theorem is much weaker than might at first be expected. If $D$ is ribbon graphic then $D=D(G)$ for some ribbon graph $G$. Applying Proposition~\ref{t.hs} to $G$ then taking the delta-matroid of each ribbon graph will give a proof of the first part of Theorem~\ref{t.1} (that $D$ can be put in the form $D_{i,j,0}$  or $D_{i,0,k}$ depending on parity) for ribbon graphic delta-matroids. However, the uniqueness results from the second part of Theorem~\ref{t.1} do not follow in this way. This is because there may be sequences of handle slides that take you outside of the class of ribbon graphic delta-matroids (c.f. Remark~\ref{r.1}). However, we will see later that the uniqueness part of the result does indeed hold  for ribbon graphic delta-matroids (see Corollary~\ref{c.1}).
\end{remark}

We defined handle slides in terms of set systems. It is natural to ask if the set of delta-matroids is closed under handle slides.
 Example~\ref{examp3} shows that in general this is not the case: although $D$ is a delta-matroid, $D_{ab}$ is not.
 The delta-matroid from  Example~\ref{examp3} is one of A.~Bouchet and A.~Duchamp's excluded minors for binary delta-matroids from \cite{BD91}. We are thus led  to the question of whether the set of binary delta-matroids is closed under handle slides, and we turn our attention to this.

\section{Binary delta-matroids and the proof of Theorem~\ref{t.1}}\label{s.3}
Let $\mathbb{K}$ be a  field. For a finite set $E$, let $M$ be a skew-symmetric $|E|\times |E|$ matrix over $\mathbb{K}$ with rows and columns indexed by the elements of $E$. In all of our matrices, $e\in E$ indexes the $i$-th row if and only if it indexes the $i$-th column. 
Let $M\left[ A\right]$ be the principal submatrix of $M$ induced by the set $A\subseteq E$. By convention $M[\emptyset]$ is considered to be non-singular. 
Bouchet  showed in~\cite{abrep} that a delta-matroid $D(M)$ can be obtained by taking $E$ to be the ground set and $A\subseteq E$ to be feasible if and only if  $M[A]$ is non-singular over $\mathbb{K}$. 

The  \emph{twist} of a delta-matroid $D=(E,{\mathcal{F}})$ with respect to $A\subseteq E$, is the delta-matroid  $D* A:=(E,\{A\bigtriangleup X \mid  X\in \mathcal{F}\})$. It was shown by Bouchet in~\cite{ab1} that   $D* A$ is indeed  a delta-matroid.  
A delta-matroid is  \emph{representable over $\mathbb{K}$} if it has a twist that is isomorphic to $D(M)$ for some skew-symmetric matrix $M$ over $\mathbb{K}$. A delta-matroid representable over $GF(2)$ is called \emph{binary}. 
We note that ribbon graphic delta-matroids are binary (see \cite{abrep}), and also record the following result.
\begin{lemma}[Bouchet  \cite{abrep}]\label{l.rep}
Let $E$ be a finite set, $A\subseteq E$, and $M$ be a skew-symmetric $|E|\times |E|$ matrix over a field $\mathbb{K}$ with rows and columns indexed by $E$. Then if   $D=D(M)$ and $\emptyset \in \F(D\ast A)$, we have  $D\ast A=D(N)$ for some  skew-symmetric $|E|\times |E|$ matrix $N$ over  $\mathbb{K}$.
\end{lemma}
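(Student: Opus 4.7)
The plan is to exhibit $N$ explicitly by performing a \emph{principal pivot} of $M$ on the set $A$, and then to verify that this operation has exactly the effect on the family of feasible sets that corresponds to twisting by $A$.

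First, since $\emptyset\in\F(D\ast A)$ and $D=D(M)$, the definition of twist gives $A\in\F(D)$, so $M[A]$ is non-singular. Write $M$ in block form, with rows and columns ordered so that those indexed by $A$ come first:
\[
M=\begin{pmatrix} P & Q \\ -Q^{T} & R \end{pmatrix},
\]
where $P=M[A]$ is invertible and $P$, $R$ are themselves skew-symmetric because $M$ is. Define
\[
N=\begin{pmatrix} P^{-1} & P^{-1}Q \\ -Q^{T}P^{-1} & R-Q^{T}P^{-1}Q \end{pmatrix},
\]
keeping the same row/column indexing by $E$. A short calculation, using that the inverse of a skew-symmetric invertible matrix is again skew-symmetric and that $R-Q^{T}P^{-1}Q$ is skew-symmetric (being the Schur complement of a skew-symmetric block matrix), shows that $N$ is skew-symmetric over $\mathbb{K}$.

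The main content is then the classical principal-pivot identity, which I would verify directly by a block-matrix computation: for every $X\subseteq E$,
\[
\det N[X] \;=\; \frac{\det M[A\btu X]}{\det M[A]}.
\]
This is proved by partitioning $X$ as $X=(X\cap A)\cup(X\ba A)$, writing the corresponding principal submatrix of $N$ in block form, and recognising the resulting determinant as a Schur complement expression for $\det M[A\btu X]$ divided by $\det P$. (One can organise the computation so that it reduces to the single identity $\det\begin{pmatrix} P & Q\\ -Q^{T} & R\end{pmatrix}=\det P\cdot\det(R-Q^{T}P^{-1}Q)$ applied to each relevant submatrix.) Given this identity, $N[X]$ is non-singular over $\mathbb{K}$ if and only if $M[A\btu X]$ is non-singular, so
\[
\F(D(N))=\{X\subseteq E\mid M[A\btu X]\text{ non-singular}\}=\{A\btu Y\mid Y\in\F(D)\}=\F(D\ast A),
\]
which is the desired conclusion $D\ast A=D(N)$.

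The main obstacle is the determinantal identity above: the block-form manipulations are routine, but care is needed to check that the Schur-complement formula is applied correctly when $X$ straddles $A$ and $E\ba A$, and that everything is valid over an arbitrary field $\mathbb{K}$ (no characteristic assumption is needed because we only use invertibility of $P$ and the usual determinant expansion). Once the identity is in hand, the rest of the proof is bookkeeping about twists and feasibility.
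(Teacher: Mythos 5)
The paper offers no proof of this lemma --- it is quoted from Bouchet \cite{abrep} --- so the comparison here is against the known argument, which is indeed the principal pivot transform plus Tucker's determinantal identity, exactly the strategy you adopt. Your reduction of the hypothesis $\emptyset\in\F(D\ast A)$ to the invertibility of $M[A]$ is correct, and the final bookkeeping identifying $\F(D(N))$ with $\F(D\ast A)$ is fine once the determinantal identity is in hand.

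However, the matrix $N$ you write down is not the principal pivot transform of $M$ on $A$, and the step ``a short calculation shows that $N$ is skew-symmetric'' fails over a field of characteristic other than $2$. With your $N$, the transpose of the $(2,1)$ block is $\left(-Q^{T}P^{-1}\right)^{T}=-\left(P^{-1}\right)^{T}Q=P^{-1}Q$, which equals $+$ the $(1,2)$ block rather than its negative; so $N^{T}\neq -N$ in general. Relatedly, the Schur complement of $P$ in $\begin{pmatrix} P & Q\\ -Q^{T} & R\end{pmatrix}$ is $R-(-Q^{T})P^{-1}Q=R+Q^{T}P^{-1}Q$, not $R-Q^{T}P^{-1}Q$, so the block-determinant identity you invoke is also misapplied, and the claimed formula $\det N[X]=\det M[A\btu X]/\det M[A]$ does not hold for your $N$ when $X$ straddles $A$ and $E\ba A$. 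The correct matrix is the Tucker pivot
\[
N=\begin{pmatrix} P^{-1} & -P^{-1}Q \\ -Q^{T}P^{-1} & R+Q^{T}P^{-1}Q \end{pmatrix},
\]
which one checks is skew-symmetric (using $\left(P^{-1}\right)^{T}=-P^{-1}$) and does satisfy Tucker's identity; with that substitution the rest of your argument goes through. Note that over $GF(2)$, the only case the paper actually uses, your signs coincide with the correct ones, so the slip is harmless in application --- but the lemma is stated over an arbitrary field $\mathbb{K}$, and as written your construction does not prove it there.
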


We now describe handle slides in terms of matrices.
\begin{definition}\label{d.2}
Let $E$ be a finite set and $M$ be a symmetric $|E|\times|E|$ matrix over $GF(2)$ with rows and columns indexed by the elements of $E$, and let $a,b\in E$ with $a\neq b$. We define  $M_{ab}$  to be the matrix obtained from $M$ by 
adding the column of $b$ to the column of $a$, then, in the resulting matrix, adding the row of $b$ to the row of $a$. 
We say that $M_{ab}$ is obtained by a \emph{handle slide}, or by \emph{handle sliding $a$ over $b$}.
\end{definition}
Note that in Definition~\ref{d.2} adding the  row of $b$ to the row of $a$ then, in the resulting matrix, the column of $b$ to the column of $a$ also results in the matrix $M_{ab}$.  Definition~\ref{d.2} by no means describes a new operation on matrices. For example the operation was considered by R.~Kirby in the context of handle slides and 3-manifolds in \cite{MR0467753}.

The following theorem shows that all the concepts of handle slides defined here agree.
\begin{theorem}\label{con2}
Let $M$  be a symmetric matrix over $GF(2)$. Then \[   D(M_{ab})=D(M)_{ab}. \]
\end{theorem}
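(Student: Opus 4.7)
The plan is to check $\F(D(M_{ab})) = \F(D(M))_{ab}$ by case analysis on whether $a$ and $b$ lie in a given subset $A\subseteq E$. First I would record that, writing $M_{ab}=Q^T M Q$ with $Q=I+E_{ba}$ (the matrix unit having a single $1$ in position $(b,a)$), the only entries of $M_{ab}$ that differ from $M$ are in row $a$ and column $a$; using $M_{ab}+M_{ba}=0$ over $GF(2)$ one gets $(M_{ab})_{aj}=M_{aj}+M_{bj}$, $(M_{ab})_{ia}=M_{ia}+M_{ib}$, and $(M_{ab})_{aa}=M_{aa}+M_{bb}$.

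Three of the four cases are routine. If $a\notin A$, then $M_{ab}[A]=M[A]$ because the altered row and column of $a$ are absent from $A$. If $\{a,b\}\subseteq A$, then $M_{ab}[A]=Q[A]^T M[A]\, Q[A]$ with $\det Q[A]=1$, so $\det M_{ab}[A]=\det M[A]$. In each of these three situations $A$ also fails to lie in the toggled collection $\{Y\cup a : Y\cup b\in\F(D(M)),\ Y\subseteq E\setminus\{a,b\}\}$ (as $A$ either misses $a$ or contains $b$), so $A\in\F(D(M))_{ab}\iff A\in\F(D(M))\iff A\in\F(D(M_{ab}))$.

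The substantive case is $A=X\cup a$ with $b\notin A$. Here membership of $A$ in $\F(D(M))_{ab}$ is the exclusive-or of $X\cup a\in\F(D(M))$ and $X\cup b\in\F(D(M))$, so I need the identity
\[ \det M_{ab}[X\cup a] \;=\; \det M[X\cup a] + \det M[X\cup b] \pmod{2}. \]
To prove it, write $M_{ab}[X\cup a]$ as a block matrix with $M[X]$ in the top-left, last column $c_a+c_b$, last row $v_a+v_b$, and bottom-right entry $M_{aa}+M_{bb}$, where $c_x$ and $v_x$ denote the column and row of $M$ indexed by $x$, restricted to $X$. Expanding by multilinearity, first in the last row written as $(v_a,M_{aa})+(v_b,M_{bb})$ and then in each resulting last column written as $(c_a,M_{aa})^T+(c_b,0)^T$ and $(c_a,0)^T+(c_b,M_{bb})^T$ respectively, yields four determinants. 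Two of them are exactly $\det M[X\cup a]$ and $\det M[X\cup b]$; the other two are the determinants of $\left(\begin{smallmatrix}M[X] & c_b\\ v_a & 0\end{smallmatrix}\right)$ and $\left(\begin{smallmatrix}M[X] & c_a\\ v_b & 0\end{smallmatrix}\right)$, which are transposes of each other because $M$ is symmetric (so $c_x^T=v_x$); their determinants therefore coincide and cancel over $GF(2)$.

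The only nontrivial ingredient is this multilinearity expansion together with the symmetry of $M$; the rest of the argument is bookkeeping. A quick sanity check with $X=\emptyset$ reproduces $\det M_{ab}[\{a\}]=M_{aa}+M_{bb}=\det M[\{a\}]+\det M[\{b\}]$, as expected.
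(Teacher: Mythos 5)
Your proof is correct and follows essentially the same route as the paper's: the same three-way case split (with the cases $a\notin A$ and $a,b\in A$ handled by the observation that the principal submatrix is unchanged, respectively changed only by determinant-preserving row/column additions), and the same key identity $\det M_{ab}[X\cup a]=\det M[X\cup a]+\det M[X\cup b]$ proved by splitting the determinant into four terms, two of which are the desired minors and two of which are transposes of each other (by symmetry of $M$) and hence cancel over $GF(2)$. Your use of multilinearity in the last row and column is just a tidier packaging of the paper's Laplace cofactor expansion, producing exactly the same four terms.
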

\begin{proof}
We need to show that $D(M_{ab})$ and $D(M)_{ab}$ have the same feasible sets. In view of the definition of handle slides, Definition~\ref{d1}, it suffices to prove that, for $Y\subseteq E$, 
\begin{enumerate}
\item \label{con2.a}  $ \det\left(  M_{ab}[Y] \right) = \det\left(  M[Y] \right)  $ if $a\notin Y$,
\item \label{con2.b} $ \det\left(  M_{ab}[Y] \right) = \det\left(  M[Y] \right)  $ if $a,b\in Y$, and
\item \label{con2.c} $\det(M_{ab}[Y])=\det(M[Y])+\det(M[Y \btu \{a, b\}])$ if $a\in Y$ and $b\notin Y$.
\end{enumerate}

The first  item is trivial since $M[Y]=M_{ab}[Y]$ when $a\notin Y$.

For the second item, suppose that $a,b\in Y$. Observe that in this case applying the construction in Definition~\ref{d.2} to   $M[Y]$ results in $M_{ab}[Y]$  (i.e., $(M[Y])_{ab}=M_{ab}[Y]$). Since adding one row or column of a matrix to another row or column does not change the determinant,  $\det(M[Y])=\det(M_{ab}[Y])$.

For the third item, suppose that $a\in Y$ and $b\notin Y$. Set $X=Y\ba a$, so that $Y=X\cup a$ and $Y \btu \{a, b\} = X\cup b$. We then need to show \begin{equation}\label{e.con2a}
\det(M_{ab}[X\cup a])=\det(M[X\cup a])+\det(M[X\cup b]).
\end{equation}
 (We will work in terms of the set $X$, rather than $Y$, as it simplifies the exposition.)  
Suppose that $M[X\cup \{a,b\}]=[a_{i,j}]_{1\leq i,j\leq n}$. Without loss of generality, we  assume that $a$ indexes the first row and column of  $M[X\cup \{a,b\}]$, and $b$ indexes the second.   
Then $M[X]  =   [a_{i,j}]_{3\leq i,j\leq n} $;  
$M[X\cup a]$ is the $(n-1)\times(n-1)$ matrix obtained from   $M[X\cup \{a,b\}]$ by deleting  its second row and column;  
$M[X\cup b]$ is the $(n-1)\times(n-1)$ matrix obtained from   $M[X\cup \{a,b\}]$ by deleting  its first row and column; 
and $M_{ab}[X\cup a]$ is  the $(n-1)\times(n-1)$ matrix whose first row is $  \begin{bmatrix} a_{1,1}+a_{2,2} & a_{1,3}+a_{2,3} & \cdots & a_{1,n}+a_{2,n}\end{bmatrix}$, first column is   $\begin{bmatrix} a_{1,1}+a_{2,2} & a_{3,1}+a_{3,2} & \cdots & a_{n,1}+a_{n,2}\end{bmatrix}^T$, with the rest of the matrix given by  $M[X]$.

Letting $M[X]_{i,j}$ denote the matrix obtained by deleting the $i$-th row and $j$-th column of $M[X]$, using the Laplace (cofactor) expansion of the determinant, expanding down the first row and column,  gives
\begin{multline}\label{e.con2b}
\det(M_{ab}[X\cup a]) = \left( a_{1,1}\det (M[X])+  \sum_{3\leq i,j\leq n}   a_{1,i}a_{j,1}\det (M[X]_{i,j})  \right)
\\  + \left( a_{2,2}\det (M[X])+  \sum_{3\leq i,j\leq n}   a_{2,i}a_{j,2}\det (M[X]_{i,j})  \right)
\\ +  \left( \sum_{3\leq i,j\leq n}   a_{1,i}a_{j,2}\det (M[X]_{i,j})  \right) +\left( \sum_{3\leq i,j\leq n}   a_{2,i}a_{j,1}\det (M[X]_{i,j})  \right). 
\end{multline}
By expanding down the first row and column of   $M[X\cup a]$ and of $M[X\cup b]$, we see the first and second bracketed terms on the right-hand side of \eqref{e.con2b} equal $\det(M[X\cup a])$ and $\det(M[X\cup b])$, respectively.  The remaining two sums in \eqref{e.con2b} are also determinants.
Let $N$ be the  $(n-1)\times(n-1)$ matrix whose first row is $  \begin{bmatrix} 0 & a_{1,3} & a_{1,4} & \cdots & a_{1,n}\end{bmatrix}$, first column is   $\begin{bmatrix} 0 & a_{3,2} & a_{4,2} & \cdots & a_{n,2}\end{bmatrix}^T$, with the rest of the matrix given by  $M[X]$.
Let $P$ be the $(n-1)\times(n-1)$ matrix whose first row is $  \begin{bmatrix} 0 & a_{2,3} & a_{2,4} & \cdots & a_{2,n}\end{bmatrix}$, first column is   $\begin{bmatrix} 0 & a_{3,1} & a_{4,1} & \cdots & a_{n,1}\end{bmatrix}^T$, with the rest of the matrix given by  $M[X]$.
By expanding down the first row and column of  the $N$ and $P$, we see the third and fourth bracketed terms on the right-hand side of \eqref{e.con2b} equal $\det(N)$ and $\det(P)$, respectively. However, since $M[X\cup \{a,b\}]$ is symmetric,  we see $N=P^T$, and so $\det(N)=\det(P)$. Since we are working over $GF(2)$, Equation~\eqref{e.con2a}, and so the theorem, holds.
\end{proof}

The following observation is an immediate consequence of Lemma~\ref{l.rep} and Theorem~\ref{t.hs}. It should be contrasted with the observations made in Remark~\ref{r.1}. We note that Corollary~\ref{c.3} is generalised by Theorem~\ref{th.cl}  where the assumption that the empty set is feasible is removed.
\begin{corollary}\label{c.3}
The set of binary delta-matroids in which the empty set is feasible is closed under handle slides. 
\end{corollary}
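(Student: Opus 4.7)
The plan is to combine Lemma~\ref{l.rep} with Theorem~\ref{con2} in an essentially mechanical way, so the corollary falls out almost for free once we know the matrix-level handle slide preserves symmetry over $GF(2)$.

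First, let $D=(E,\F)$ be binary with $\emptyset\in\F$. By definition of binary, there exists $A\subseteq E$ and a skew-symmetric matrix $M$ over $GF(2)$ such that $D\ast A \cong D(M)$; writing $D = D(M)\ast A$ and using $\emptyset\in\F(D)$, Lemma~\ref{l.rep} applied with the roles of $D$ and $A$ suitably matched yields a symmetric matrix $N$ over $GF(2)$ with $D=D(N)$. (Recall that over $GF(2)$, skew-symmetric matrices are precisely symmetric matrices, modulo conventions on the diagonal, and Theorem~\ref{con2} is stated for symmetric matrices.) Thus, whenever the empty set is feasible, we can assume outright that $D$ is presented by a symmetric matrix.

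Next, apply Theorem~\ref{con2} to get $D_{ab} = D(N)_{ab} = D(N_{ab})$. The only thing left to verify is that $N_{ab}$ is again a symmetric matrix over $GF(2)$, since then $D_{ab}=D(N_{ab})$ is by definition binary, and moreover $\emptyset\in\F(D_{ab})$ is automatic (either directly from the convention that $N_{ab}[\emptyset]$ is non-singular, or from the set-system definition of $\F_{ab}$, since $\emptyset$ is not of the form $X\cup a$).

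Checking symmetry of $N_{ab}$ is a short computation: handle sliding $a$ over $b$ adds column $b$ to column $a$ and then row $b$ to row $a$. For indices $i,j\notin\{a\}$ the entries are unchanged, and one verifies directly that the $(a,j)$ and $(j,a)$ entries both become $n_{a,j}+n_{b,j}=n_{j,a}+n_{j,b}$, using symmetry of $N$; the diagonal entry at $(a,a)$ becomes $n_{a,a}+n_{b,b}$ after the characteristic-two cancellation $2n_{a,b}=0$. Hence $N_{ab}$ is symmetric over $GF(2)$, and the corollary follows.

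There is no real obstacle: the content of the corollary is entirely packaged in Lemma~\ref{l.rep} and Theorem~\ref{con2}. The only mild subtlety is the $GF(2)$ bookkeeping that ensures the row/column operation defining $M_{ab}$ stays inside the class of symmetric matrices, which is what delivers binarity of the output.
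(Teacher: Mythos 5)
Your proof is correct and is essentially the paper's intended argument: the paper states Corollary~\ref{c.3} as an immediate consequence of Lemma~\ref{l.rep} together with the matrix-level handle slide theorem (the citation of Theorem~\ref{t.hs} there is evidently a slip for Theorem~\ref{con2}, which is what you correctly use). You have simply filled in the details the paper leaves implicit, namely that $N_{ab}$ remains symmetric over $GF(2)$ and that $\emptyset$ remains feasible.
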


\begin{remark}
A proof of Theorem~\ref{t.hs} in the special case where $G$ is a  bouquet can be obtained from Theorem~\ref{con2}. The interlacement between, and the orientability of, edges of a bouquet $B$ can be used to obtain a matrix $M$ such that $D(B)=D(M)$ (see \cite{CMNR1} for a description of how).  By examining how  interlacement and orientability changes under a handle slide, it can be shown that $D(B_{ab})= D(M_{ab})$. Theorem~\ref{con2} then gives $D(M_{ab}) = D(M)_{ab}=D(B)_{ab}$. 
\end{remark}

Our starting point was the observation that handle slides can be used to put any bouquet into the form $B_{i,j,k}$. The following says that this result holds on the level of binary delta-matroids.
\begin{lemma}\label{t.3}
Let $D$ be a binary delta-matroid such that the empty set is feasible. Then there is a sequence of handle slides taking $D$ to $D_{i,j,k}$, for some $i,j,k$.
\end{lemma}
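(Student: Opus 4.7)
The plan is to reduce the lemma to a statement about symmetric matrices over $GF(2)$. Since $\emptyset\in \F(D)=\F(D\ast\emptyset)$, Lemma~\ref{l.rep} supplies a symmetric $|E|\times|E|$ matrix $M$ over $GF(2)$ with $D=D(M)$, and by Theorem~\ref{con2} handle slides on $D$ are realised by the matrix operations of Definition~\ref{d.2}. Direct sums of delta-matroids correspond to block-diagonal matrices, and by Example~\ref{examp2} the summands of $D_{i,j,k}$ are represented by the three matrices $[0]$, $\left(\begin{smallmatrix}0&1\\1&0\end{smallmatrix}\right)$, and $[1]$. It therefore suffices to show that any symmetric matrix over $GF(2)$ can be transformed, by a sequence of handle-slide operations, into a block-diagonal matrix whose diagonal blocks are of these three types.

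I would prove this matrix statement by induction on $n=|E|$, the base cases $n\le 1$ being immediate. For the inductive step, first suppose some diagonal entry $m_{aa}$ equals $1$. For each $b\neq a$ with $m_{ab}=1$, the handle slide of $b$ over $a$ changes only row $b$, column $b$ and the entries $m_{ab},m_{ba}$, sending the latter two to $0$ while leaving $m_{aa}=1$ untouched. These slides can therefore be performed one after the other, after which row and column $a$ are zero off the diagonal, a $[1]$ block has been split off, and induction applies. If instead $M$ has zero diagonal but $m_{ab}=1$ for some $a\neq b$, then for each $c\in E\ba\{a,b\}$ I would apply the slide of $c$ over $b$ if $m_{ac}=1$ and then the slide of $c$ over $a$ if the current value of $m_{bc}$ is $1$. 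Expanding Definition~\ref{d.2} shows that these slides clear $m_{ac},m_{bc},m_{ca},m_{cb}$, preserve the block $\left(\begin{smallmatrix}0&1\\1&0\end{smallmatrix}\right)$ at $\{a,b\}$, and leave every diagonal entry equal to $0$: the new $(c,c)$ entry contributed by the slide of $c$ over $b$ is $m_{bb}+m_{bc}+m_{cb}+m_{cc}=0$ over $GF(2)$, and similarly for the second slide. After processing every $c$, a $\left(\begin{smallmatrix}0&1\\1&0\end{smallmatrix}\right)$ block is split off, the remaining submatrix is symmetric with zero diagonal, and induction completes the argument. The residual case $M=0$ yields $n$ copies of the $[0]$ block and requires no work.

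The main obstacle is the bookkeeping in the zero-diagonal case: one must verify, for each of the four possibilities for the pair $(m_{ac},m_{bc})$, that the stated composition of slides (with the second slide applied to the already-updated matrix) indeed zeros the four entries $m_{ac},m_{bc},m_{ca},m_{cb}$ while preserving the $\{a,b\}$ block and leaving every other $m_{dd}$ equal to $0$. This reduces to an explicit computation from Definition~\ref{d.2} in which the key $GF(2)$ cancellations occur precisely because the $\{a,b\}$ block has zeros on its diagonal.
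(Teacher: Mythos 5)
Your proposal is correct and follows essentially the same route as the paper's proof: represent $D$ as $D(M)$ via Lemma~\ref{l.rep}, translate handle slides into the matrix operation via Theorem~\ref{con2}, and then use a symmetric elimination argument to split off blocks $[1]$, $\left(\begin{smallmatrix}0&1\\1&0\end{smallmatrix}\right)$, and $[0]$, inducting on the remaining submatrix. The case analysis you describe (slide $c$ over $b$ when $m_{ac}=1$, then over $a$ if the updated $m_{bc}=1$) matches the paper's treatment, and your explicit check that the diagonal stays zero is a correct, slightly more detailed version of what the paper leaves implicit.
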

\begin{proof}
Since the empty set is feasible, by Lemma~\ref{l.rep}, $D=D(M)$ for some symmetric matrix $M$ over $GF(2)$.
We need to use handle slides and reordering of rows and columns to put $M$ in a block diagonal form in which each block is one of $\begin{bmatrix} 0\end{bmatrix}$, $\begin{bmatrix} 1 \end{bmatrix}$, or $\begin{bmatrix} 0 & 1 \\ 1 & 0\end{bmatrix}$. (It is clear that the delta-matroid of such a matrix equals $D_{i,j,k}$, for some $i,j,k$.)
To do this first observe that once we have a block of a matrix then a  handle slide $M_{ab}$ preserves that block as long as $a$ does not index a row or column of it. Thus, by induction, it is enough to show that we can always use handle slides to construct a block of the required form in the matrix $M$. 

If $M$ has a diagonal entry $m_{e,e}=1$. Then for each $f$ with  $m_{f,e}=m_{e,f}=1$  handle slide $f$ over $e$. In the resulting matrix,   all other entries of the $e$-th row and $e$-th column are zero, giving a block $\begin{bmatrix} 1 \end{bmatrix}$.

Now suppose all diagonal entries of $M$ are zero. If there is some $e$ such that all entries of the $e$-th row and $e$-th column are zero, then we have a block $\begin{bmatrix} 0\end{bmatrix}$.  
 Otherwise there is some $f$ with  $m_{f,e}=m_{e,f}=1$. For convenience, and without loss of generality, we can reorder the rows and columns so that $e$ labels the first row and column, and $f$ labels the second. So we have the submatrix  $\begin{bmatrix} 0 & 1 \\ 1 & 0\end{bmatrix}$ in the top left corner of $M$. We need to use handle slides to make all other entries in the first two rows and columns zero. This can be done as follows.
If $m_{i,e}=m_{e,i}=1$ and $m_{i,f}=m_{f,i}=0$ sliding $i$ over $f$ makes the $(i,e)$ and $(e,i)$ entries zero.
If $m_{i,e}=m_{e,i}=0$ and $m_{i,f}=m_{f,i}=1$ sliding $i$ over $e$ makes the $(i,f)$ and $(f,i)$ entries zero.
If $m_{i,e}=m_{e,i}=1$ and $m_{i,f}=m_{f,i}=1$ sliding $i$ over $f$, then $i$ over $e$  makes the four entries zero. Thus we can obtain a block  $\begin{bmatrix} 0 & 1 \\ 1 & 0\end{bmatrix}$, as required. This completes the proof of the lemma.
\end{proof}

We can now prove Theorem~\ref{t.1}.
\begin{proof}[Proof of Theorem~\ref{t.1}]
By Lemma~\ref{t.3}, there is a sequence of handle slides taking $D$ to $D_{i,j,k}$, for some $i,j,k$.  We have that $D_{i,j,k} = D(M_{i,j,k})$ where $M_{i,j,k}$ consist of $i$ blocks of $\begin{bmatrix} 0\end{bmatrix}$, $j$ blocks of  $\begin{bmatrix} 0 & 1 \\ 1 & 0\end{bmatrix}$, and $k$ blocks of the matrix $\begin{bmatrix} 1 \end{bmatrix}$. 

It is readily seen from Definition~\ref{d1} that handle slides of delta-matroids preserve parity, so $D$ is odd if and only if  $D_{i,j,k}$ is.   A delta-matroid $D(M)$, where $M$ is a symmetric matrix over $GF(2)$,  is odd if and only if there is a 1 on the diagonal of $M$ (this follows from the fact that a symmetric matrix of odd size over $GF(2)$ with zeros on the diagonal must be singular). Thus $D$ is even if and only if $D_{i,j,k}$ has $k= 0$, and the even case of the theorem follows.

Now suppose that $D$ is odd. Then handle slides can be used to put it in the form  $D_{i,j,k}$ with $k>0$. It remains to put this $D_{i,j,k}$ in the form $D_{i,0,p}$ for some $p\in \mathbb{N}$. 
If $j=0$ we are done, otherwise, possibly after reordering rows and columns, there is a block       $\begin{bmatrix} 0 & 1 &0\\ 1 & 0 &0\\ 0&0&1\end{bmatrix}$ whose rows and columns are labelled by $a,b,c$, say, in that order. The sequence of handle slides $a$ over $c$, $c$ over $b$, and $b$ over $a$ transforms this into the $3\times 3$ identity matrix. It follows that if $D_{i,j,k}$ has $k\neq 0$, then there is a sequence of handle slides taking $M$ to $D_{i,0,k+2j}$, completing the proof of the first part of the theorem.

For the second claim, suppose that there are sequences of handle slides take $D$ to $D_{i,j,k}$ and to $D_{p,q,r}$. Then there is a sequence of handle slides taking  $D_{i,j,k}$  to $D_{p,q,r}$.  Since a determinant of a block diagonal matrix is the product of the determinants of its blocks, the size of the largest feasible set in $D_{i,j,k}$ is $|E|-i$, and in  $D_{p,q,r}$ is $|E|-p$. Upon observing from Definition~\ref{d1}  that handle slides preserve the size of the largest feasible sets, we have that $i=p$, as required.
\end{proof}

It is worth emphasising that we have shown that if $D$ can be taken to $D_{i,j,k}$, then $2j+k$ is the size of the largest feasible set in $D$, and $i$ is the size of the ground set minus this number.

\begin{corollary}\label{c.2}
Let $D=(E,\mathcal{F})$ be a binary  delta-matroid in which the empty set is feasible and such that there is a sequence of handle slides taking $D$ to $D_{i,j,k}$.
\begin{enumerate}
\item Suppose $D$ is even. There is a sequence of handle slides taking $D$ to $D_{p,q,r}$ if and only if $p=i$, $q=j$, and $r=k=0$.
\item Suppose $D$ is odd.   There is a sequence of handle slides taking $D$ to $D_{p,q,r}$ if and only if $p=i$, $q=\ell$, and $r=|E|-i-2\ell$, for some $0\leq \ell\leq  \lfloor \frac{|E|-i}{2} \rfloor$.
\end{enumerate}
\end{corollary}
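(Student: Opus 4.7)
The plan is to derive the corollary from Theorem~\ref{t.1}, from three easy invariants of handle slides, and from the three-slide trade used in the proof of Lemma~\ref{t.3}. The invariants are that handle slides preserve the ground set (and hence $|E|$), parity (noted in the proof of Theorem~\ref{t.1}; it is immediate from Definition~\ref{d1} because the sets $X\cup a$ adjoined in $\F_{ab}$ have the same cardinality as the existing $X\cup b\in\F$), and the size of the largest feasible set (observed immediately before the corollary). In addition, the remark just before the corollary records $|E(D_{i,j,k})| = i + 2j + k$ and largest feasible size $2j + k$, and one also uses the easy fact that $D_{p,q,r}$ is even precisely when $r = 0$.

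For the forward direction, suppose $D$ is taken to $D_{p,q,r}$ by handle slides. Then Theorem~\ref{t.1} gives $p = i$, and preservation of $|E|$ gives $2q + r = 2j + k = |E| - i$. Parity preservation forces $r = 0$ (whence $q = j$) in the even case, and $r\geq 1$ in the odd case; in the latter, setting $\ell = q$ yields the stated parametrization.

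For the backward direction, the even case is trivial because $(p,q,r) = (i,j,0)$ coincides with the hypothesised $(i,j,k)$ when $k = 0$. In the odd case I would first apply Theorem~\ref{t.1} to move $D$ to the canonical form $D_{i,0,|E|-i}$, and then repeatedly apply the reverse of the three-slide trade from the proof of Lemma~\ref{t.3}. That trade converts the block
\[
\begin{bmatrix} 0 & 1 & 0 \\ 1 & 0 & 0 \\ 0 & 0 & 1 \end{bmatrix}
\quad\text{into}\quad
\begin{bmatrix} 1 & 0 & 0 \\ 0 & 1 & 0 \\ 0 & 0 & 1 \end{bmatrix}
\]
by performing the slides $a$ over $c$, $c$ over $b$, $b$ over $a$. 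Since handle slides are involutions (noted after Definition~\ref{d1}), the reverse sequence is again a sequence of handle slides, and it trades three non-orientable loops for one interlaced pair plus one non-orientable loop. Iterating this reverse trade on $D_{i,0,|E|-i}$ realizes every target $(i,\ell,|E|-i-2\ell)$ in the admissible range.

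The main obstacle is just the bookkeeping needed to verify that each iteration of the reverse trade still finds three non-orientable loops available to form the required $3\times 3$ block; this is precisely what pins down the upper bound on $\ell$. Everything else follows immediately from the invariants together with Theorem~\ref{t.1}.
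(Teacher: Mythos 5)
Your proof is correct and takes essentially the same route as the paper's: the forward direction combines Theorem~\ref{t.1} with the invariance under handle slides of the ground set, parity and the size of a largest feasible set, while the backward direction reaches each $D_{i,\ell,|E|-i-2\ell}$ by passing through the common form $D_{i,0,|E|-i}$ and reversing handle-slide sequences (the paper simply cites Theorem~\ref{t.1} applied to $D_{i,\ell,|E|-i-2\ell}$, whereas you unroll the same three-slide trade explicitly; note that trade appears in the proof of Theorem~\ref{t.1}, not of Lemma~\ref{t.3}). Your parity observation that $r\geq 1$ in the odd case is in fact sharper than the corollary's stated range, which at the boundary value $\ell=\lfloor\frac{|E|-i}{2}\rfloor$ with $|E|-i$ even would name an even target $D_{i,\ell,0}$ that an odd $D$ cannot reach.
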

\begin{proof}
The first item follows from Theorem~\ref{t.1} upon noting that handle slides preserve parity. 
For the second item, suppose $D$ is odd. By Theorem~\ref{t.1}, $D_{i,\ell,|E|-i-2\ell}$ can be taken to $D_{i,0,|E|-i}$ using handle slides, and $D_{i,j,k}$ can be taken to $D_{i,0,|E|-i}$, thus $D$ can be taken to $D_{i,\ell,|E|-i-2\ell}$ by handle slides. Conversely, by Theorem~\ref{t.1}, $D_{i,j,k}$ and $D_{p,q,r}$ can both be taken to $D_{i,0,|E|-i}$ by handle slides, and so $i=p$ and $2q+r=|E|-i$, and result  follows.  
\end{proof}

Theorem~\ref{t.hs} can be used to show that ribbon graphic delta-matroids are not closed under handle slides. Choose a binary delta-matroid $D$ with empty set feasible that is not ribbon graphic. There is a sequence of handle slides taking $D$ to a ribbon graphic delta matroid $D_{i,j,k}$. Thus there must be a handle slide between a graphic and non-graphic delta-matroid. Despite this, the following result says that we can always work with handle slides within the class of ribbon graphic delta-matroids.   
\begin{corollary}\label{c.1}
Let $D=(E,\mathcal{F})$ be a ribbon graphic delta-matroid in which the empty set is feasible. If there is a sequence of handle slides taking $D$ to $D_{i,j,k}$, then there is a sequence of handle slides in which every delta-matroid is ribbon graphic that takes $D$ to $D_{i,j,k}$.
\end{corollary}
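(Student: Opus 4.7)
The plan is to apply Proposition~\ref{hs} and Theorem~\ref{t.hs} twice. For the first application, since $D$ is ribbon graphic with $\emptyset\in\F(D)$, write $D=D(G)$ for some ribbon graph $G$; by Example~\ref{examp1}, $G$ is a disjoint union of bouquets. Apply Proposition~\ref{hs} to each connected component of $G$ to obtain a sequence of ribbon-graph handle slides taking $G$ to a disjoint union $B^{*}$ of canonical bouquets of the form $B_{i_{\ell},j_{\ell},0}$ (from orientable components) or $B_{i_{\ell},0,k_{\ell}}$ with $k_{\ell}\geq 1$ (from non-orientable ones). Via Theorem~\ref{t.hs} this lifts step by step to a sequence of delta-matroid handle slides passing only through ribbon graphic delta-matroids, and terminating at $D(B^{*})=D_{I,J,K}$ with $I,J,K$ the componentwise sums. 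Theorem~\ref{t.1} forces $I=i$, and parity preservation (evident from Definition~\ref{d1}) gives $K=0$ iff $k=0$; in the even case Corollary~\ref{c.2} additionally pins down $J=j$, giving $D_{I,J,K}=D_{i,j,k}$, and this first stage by itself suffices.

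In the odd case one must further bridge $D_{I,J,K}$ to $D_{i,j,k}$, where $K,k\geq 1$ and $2J+K=2j+k$. By Example~\ref{examp2}, $D_{I,J,K}=D(B_{I,J,K})$ and $D_{i,j,k}=D(B_{i,j,k})$, and both single-vertex bouquets are non-orientable. Proposition~\ref{hs} therefore provides ribbon-graph handle slide sequences reducing each of them to the common canonical form $B_{I,0,2J+K}=B_{i,0,2j+k}$; via Theorem~\ref{t.hs} these lift to delta-matroid handle slide sequences $D_{I,J,K}\to D_{I,0,2J+K}$ and $D_{i,j,k}\to D_{i,0,2j+k}$, both through ribbon graphic delta-matroids. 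Since delta-matroid handle slides are involutions (as noted after Definition~\ref{d1}), the second of these sequences can be reversed, traversing the same ribbon graphic delta-matroids in the opposite order; concatenation then produces a ribbon-graphic delta-matroid handle slide sequence from $D_{I,J,K}$ to $D_{i,j,k}$. Concatenating this with the first-stage sequence finishes the construction.

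The main subtlety — and the point I would most want to phrase carefully — is the transition between the two stages, where the ribbon-graph witness of $D_{I,J,K}$ silently changes from the disjoint union $B^{*}$ (produced in the first stage) to the single-vertex bouquet $B_{I,J,K}$ (used in the second). This is permissible because the object being constructed is a sequence of delta-matroids, with ribbon-graph witnesses playing only an auxiliary role, and $D(B^{*})=D(B_{I,J,K})=D_{I,J,K}$ as delta-matroids on the common ground set $E$: both representations encode the same block-diagonal interlacement-and-orientability data and thus determine the same feasible family.
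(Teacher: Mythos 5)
Your proof is correct in outline and, in the even case, is essentially the paper's own argument: realise $D$ as the delta-matroid of a (union of) bouquet(s), push the bouquet to canonical form with Proposition~\ref{hs}, lift via Theorem~\ref{t.hs}, and pin down the parameters with Theorem~\ref{t.1} and Corollary~\ref{c.2}. In the odd case you take a genuinely different route. You apply Proposition~\ref{hs} a second time, to the two bouquets $B_{I,J,K}$ and $B_{i,j,k}$, reduce both to the all--non-orientable form, and reverse one of the lifted sequences using that delta-matroid handle slides are involutions. The paper instead gives an explicit ribbon-graph construction: from the Proposition~\ref{hs} output $B_{p,0,r}$ it converts a triple of non-orientable loops $aabbcc$ into an interlaced orientable pair plus a non-orientable loop via the slides $c$ over $b$, $b$ over $a$, $a$ over $c$, reaching $B_{p,m,r-2m}$ for every admissible $m$ directly. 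Your route is more economical (nothing is needed beyond Proposition~\ref{hs} and reversibility); the paper's is constructive and avoids the one step of yours that needs tightening. Namely, to concatenate your two lifted sequences you need the terminal delta-matroids $D(B_{I,0,2J+K})$ and $D(B_{i,0,2j+k})$ to be literally equal as set systems on $E$, not merely of the same isomorphism type: Proposition~\ref{hs} does not control \emph{which} $i$ elements of $E$ end up as the orientable loops of the canonical form, so the two endpoints may have different never-feasible sets. This is repairable (conjugate the second sequence by a permutation of $E$ matching the two canonical forms; this preserves the form of its source and keeps every intermediate delta-matroid ribbon graphic), and it is the same implicit identification the paper makes in proving Corollary~\ref{c.2}, but as written the gluing is asserted rather than justified. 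Your remark about switching the witness from $B^{*}$ to $B_{I,J,K}$ is correct and well placed; the switch at $B_{I,0,2J+K}$ is the one that actually needs the care.
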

\begin{proof}
First suppose that $D$ is even. Then, by Theorem~\ref{t.1},  $D_{i,j,k}= D_{i,j,0}$. Since $D$ is ribbon graphic $D=D(B)$ for some bouquet $B$. By Proposition~\ref{hs} there is a sequence of (ribbon graph) handle slides taking $B$ to $B_{p,q,0}$. Taking the delta-matroids of the ribbon graphs that appear in this sequence and applying Theorem~\ref{t.hs}  gives a sequence of (delta-matroid) handle slides, in which every delta-matroid is ribbon graphic, that  takes $D$ to $D_{p,q,0}$. The result then follows by Corollary~\ref{c.2}.  

Now suppose that $D$ is odd. Then, by Corollary~\ref{c.2},  $ D_{i,j,k}=  D_{i,\ell,|E|-i-2\ell}$ for some $0\leq \ell\leq  \lfloor \frac{|E|-i}{2} \rfloor$.
 Since $D$ is ribbon graphic $D=D(B)$ for some bouquet $B$. By Proposition~\ref{hs} there is a sequence of (ribbon graph) handle slides taking $B$ to $B_{p,0,r}$. For a bouquet $H$ consisting of three non-interlaced non-orientable loops $a$, $b$, and $c$ whose ends appear in the order $aabbcc$ when travelling round the vertex, observe that $((H_{cb})_{ba})_{ac}$ consists of a pair of interlaced orientable loops $b$ and $c$, and a non-interlace non-orientable loop $a$. It follows that there is a   sequence of (ribbon graph) handle slides taking  $B_{p,0,r}$, and hence $B$, to $B_{p,m,r-2m}$ for each $0\leq m \leq  \lfloor \frac{|E|-p}{2} \rfloor$. Taking the delta-matroids of the ribbon graphs that appear in this sequence from $B$, and applying Theorem~\ref{t.hs},  gives a sequence of (delta-matroid) handle slides in which every delta-matroid is ribbon graphic that  takes $D$ to $D_{p,m,r-2m}$. By Corollary~\ref{c.2}, for some $m$, $D_{p,m,r-2m}=D_{i,\ell,|E|-i-2\ell}=D_{i,j,k}$, and the result follows.
\end{proof}

\begin{remark}
It is natural to ask if the binary condition in Theorem~\ref{t.1} can be dropped.  That is, can every delta-matroid in which the empty set is feasible be taken to a canonical form $D_{i,j,k}$ by a sequence of handle slides?
The answer is no. For example,  the delta-matroid over $E=\{1,2,3\}$ with feasible sets $\F=\{ \{1,2,3\}, \{1,2\}, \{1,3\}, \{2,3\},\emptyset\}$ cannot be. (Alternatively, that the answer is no follows  from Theorem~\ref{th.cl} below since the $D_{i,j,k}$ are binary.) However, there should be a version of Theorem~\ref{t.1} that includes non-binary delta-matroids or set systems. The key problem is determining the canonical forms (i.e., the analogues of the $D_{i,j,k}$, which may not be delta-matroids) for other classes of delta-matroids.
\end{remark}

\section{Closure under handle slides}\label{s.4}
Although handle slides are defined for all delta-matroids, because of our motivation from the classification of bouquets we have so far focused on delta-matroids in which the empty set is feasible. We now examine what happens when it is not. 

\begin{theorem}\label{th.cl}
The set of binary delta-matroids is closed under handle slides.
\end{theorem}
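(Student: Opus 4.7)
The plan is to argue based on whether $D$ has a feasible set $F$ with $F\cap\{a,b\}\in\{\emptyset,\{a,b\}\}$. By Lemma~\ref{l.rep}, for any $F\in\F(D)$ the twist $D\ast F$ has the empty set feasible and equals $D(M)$ for some symmetric matrix $M$ over $GF(2)$, so I can always pass to a matrix representation after a suitable twist.

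In the easy case such an $F$ exists. Direct manipulation of Definition~\ref{d1} yields the commutation identities $(D\ast F)_{ab}=D_{ab}\ast F$ when $F\cap\{a,b\}=\emptyset$, and $(D\ast F)_{ba}=D_{ab}\ast F$ when $F\cap\{a,b\}=\{a,b\}$. Since Theorem~\ref{con2} identifies $(D\ast F)_{ab}$ and $(D\ast F)_{ba}$ with $D(M_{ab})$ and $D(M_{ba})$ respectively, both are binary, and twisting back by $F$ shows $D_{ab}$ is binary.

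The harder case is when every $F\in\F(D)$ has $|F\cap\{a,b\}|=1$, since the analogous commutation identities here produce operations on $D$ other than $D_{ab}$. Pick any such $F$ and write $F\cap\{a,b\}=\{a\}$ (the case $F\cap\{a,b\}=\{b\}$ is handled by an entirely analogous argument). Let $D\ast F=D(M)$ and $F=A'\cup\{a\}$. Since the feasible sets of $D(M)$ all meet $\{a,b\}$ in $\emptyset$ or in $\{a,b\}$, the singletons $\{a\}$, $\{b\}$ and the sets $\{a,c\}$, $\{b,c\}$ for $c\in E\setminus\{a,b\}$ are infeasible in $D(M)$. The determinantal feasibility criterion then forces $m_{aa}=m_{bb}=0$ and $m_{ac}=m_{bc}=0$ for every $c\in E\setminus\{a,b\}$, so $M$ is block diagonal with a $2\times 2$ skew block on $\{a,b\}$ (with off-diagonal entry $m_{ab}\in\{0,1\}$) and a symmetric block $M''$ on $E\setminus\{a,b\}$.

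If $m_{ab}=0$ then no feasible set of $D$ contains $b$, so the symmetric difference in Definition~\ref{d1} is empty and $D_{ab}=D$ is binary. If $m_{ab}=1$, unwinding the twist shows $\F(D_{ab})=\{X\cup b\mid X\cup b\in\F(D)\}$, and a short calculation gives $D_{ab}\ast(A'\cup\{b\})=D(N)$, where $N$ is obtained from $M$ by replacing the $(a,b)$ and $(b,a)$ entries by $0$; hence $D_{ab}=D(N)\ast(A'\cup\{b\})$ is binary. The main obstacle is precisely this harder case, where commutation does not deliver $D_{ab}$ and one must read off the forced block structure of $M$ and construct the matrix representation of $D_{ab}$ explicitly.
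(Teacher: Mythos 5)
Your proposal is correct and rests on the same pillars as the paper's proof --- the two twist/handle-slide commutation identities, reduction to Theorem~\ref{con2} after a suitable twist, and a separate hand analysis of the degenerate case --- but it organizes and executes the argument differently. The paper fixes one twisting set $A$ with $D\ast A=D(M)$ and splits into four cases according to the membership of $a,b$ in $A$; Cases 3 and 4 are reduced back to Cases 1 and 2 by re-twisting whenever some feasible set of $D\ast A$ separates $a$ from $b$ suitably, and the residual sub-cases are settled by using the Symmetric Exchange Axiom to pin down the structure of $\F(D\ast A)$ and then recognising $D_{ab}$ as the minor $D\setminus a$ (invoking minor-closure of binary delta-matroids) or as $(D\ast A)\oplus D_{0,1,0}$. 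You instead split on whether some feasible set meets $\{a,b\}$ evenly, which collapses the paper's four cases and their reducible sub-cases into a single step, and in the residual case you extract the forced block structure of $M$ directly from the determinantal feasibility criterion and write down an explicit representing matrix for $D_{ab}$; this avoids both the Symmetric Exchange Axiom and the appeal to minor-closure, and is arguably more self-contained. One caveat: the sub-case you wave off as ``entirely analogous'' (the only situation forcing it is that every feasible set of $D$ meets $\{a,b\}$ in $\{b\}$, whence $m_{ab}=0$) does not mirror your written $m_{ab}=0$ sub-case. There you concluded $D_{ab}=D$; here the symmetric difference in Definition~\ref{d1} is nonempty, $\F(D_{ab})$ grows to $\{\hat{X}_i\cup a,\hat{X}_i\cup b\}_{i}$, and one must exhibit the representing matrix with block $\left[\begin{smallmatrix}0&1\\1&0\end{smallmatrix}\right]$ on $\{a,b\}$ (this is precisely the paper's $(D\ast A)\oplus D_{0,1,0}$ sub-case). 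The method transfers, but the formulas and the conclusion differ, so that sub-case should be written out rather than asserted to be analogous.
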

\begin{proof}
For any delta-matroid $D$, $A\subseteq E(D)$, and $a,b\in E(D)$ with $a\neq b$. If $a, b\notin A$,
\begin{equation}\label{e.cl1}
  D_{ab}\ast A=(D\ast A)_{ab}, 
\end{equation}
and if $a, b\in A$,
\begin{equation}\label{e.cl4}
 D_{ab}\ast A = (D\ast A)_{ba} . 
\end{equation}
Equation~\eqref{e.cl1} follows easily from the observation that, since $a, b\notin A$, for any $F\subseteq E(D)$, either of $a$ or $b$ is in $F$ if and only if it is in $F\btu A$. 
Equation~\eqref{e.cl4} follows by  direct computation. Start by writing 
\[  \F (D\ast A) = \left\{ X_i, Y_j\cup a, Z_k\cup b,  W_l\cup a, W_l\cup b , T_m\cup\{a,b\}\right\}_{i\in \mathcal{I},j\in \mathcal{J},k\in \mathcal{K},l\in \mathcal{L}, m\in \mathcal{M} }  , \]
where  $X_i, Y_j,Z_k,W_l \subseteq E\ba \{a,b\}$, none of the $Y_j,Z_k,W_l$ are equal, and where the $\mathcal{I}$,  $\mathcal{J}$, $\mathcal{K}$, $\mathcal{L}$, and $\mathcal{M}$ are indexing sets.
From this it is easy to compute the feasible sets of  $(D\ast A)_{ba}$, $D$, $D_{ab}$, and $D_{ab}\ast A$, upon which it is seen that  $\F (D\ast A) = \F( (D\ast A)_{ba})$, and Equation~\eqref{e.cl4} follows.

Now suppose that $D=(E,\F)$ is a binary delta-matroid and $a,b\in E$ with $a\neq b$. Then there is some $A\subseteq E$ and some symmetric  matrix $M$ over $GF(2)$ such that $D\ast A=D(M)$. We need to show that $D_{ab}$ is binary.  That is, we need to show that $D_{ab}\ast B=D(N)$ for some $B\subseteq E$ and some symmetric  matrix $N$ over $GF(2)$.
We will consider four cases given by the membership of $a$ and $b$ in $A$.

\noindent \underline{Case 1:} Suppose that $a,b\notin A$. Then, by Equation~\eqref{e.cl1} and Theorem~\ref{con2},
\begin{equation}\label{e.cl2}
  D_{ab}\ast A =    (D\ast A)_{ab} = D(M)_{ab}= D(M_{ab}),
\end{equation}
and so  $D_{ab}$ is binary.

\noindent \underline{Case 2:} Suppose that $a\in A$ and $b\in A$. Then, by Equation~\eqref{e.cl4} and Theorem~\ref{con2},
\begin{equation}\label{e.cl5}
  D_{ab}\ast A =    (D\ast A)_{ba} = D(M)_{ba}= D(M_{ba}),
\end{equation}
and so  $D_{ab}$ is binary.

\noindent \underline{Case 3:}  Suppose that $a\in A$ and $b\notin A$.
If there is some $F\in \F(D\ast A)$ with $a\in F$ and $b\notin F$, then by Lemma~\ref{l.rep}, we see that $D\ast(A\btu F)= D(N)$, for some   symmetric  matrix $N$ over $GF(2)$. Since $ a,b\notin A\btu F$, Case 1 applies and so $D_{ab}$ is binary.

Similarly, if there is some $F\in \F(D\ast A)$ with $a\notin F$ and $b\in F$, then by Lemma~\ref{l.rep}, $D\ast(A\btu F)= D(N)$, for some   symmetric  matrix $N$ over $GF(2)$. Since $ a,b\in A\btu F$, Case 2 now  applies and so $D_{ab}$ is binary.

Otherwise every feasible set of $D\ast A$ contains both $a$ and $b$, or neither of $a$ or $b$. Suppose this is the case. There is either some $F\in \F(D\ast A)$ containing both $a$ and $b$ or there is not.

First suppose that there is, and let $F\in \F(D\ast A)$ with   $a,b\in F$. Let  $X \in \F(D\ast A)$ be such that $a,b\notin X$ (we know such a set exists, since the empty set is feasible). 
Then $a\in X\btu F$, and by the Symmetric Exchange Axiom, $X\btu \{a,u\}\in \F$ for some $u\in  X\btu F$. Since, by hypothesis, $a$ or $b$ cannot appear  in a feasible set without the other, we must have $u=b$, and so   $X\cup \{a,b\}\in \F$. 
Similarly, the Symmetric Exchange Axiom gives that $F\btu \{a,u\}\in \F$ for some $u\in  X\btu F$. Again we must have that $b=u$ and so  $F\ba \{a,b\}\in \F$.
These two observations together give that we can partition the feasible sets of $D\ast A$ to get 
 $\F (D\ast A) = \left\{ X_i, X_i\cup\{a,b\}\right\}_{i\in \mathcal{I}} $, 
where  $X_i\subseteq E\ba \{a,b\}$ and   $\mathcal{I}$ is an indexing set.
From this we see that
 $\F (D) = \left\{ \hat{X}_i\cup a, \hat{X}_i\cup b \right\}_{i\in \mathcal{I}} $, 
where for each set $X_i$, $\hat{X_i}$ denotes $X_i\btu (A\ba\{a,b\})$, and that 
 $\F (D_{ab}) = \left\{ \hat{X}_i\cup b \right\}_{i\in \mathcal{I}}$. 
We then see that  $D_{ab}  = D\ba a$.  Since $D$ is binary, and the set of binary delta-matroids is minor-closed, it follows that $D_{ab} $ is binary.

All that remains is the case where no feasible set of $D\ast A$ contains $a$ or $b$ (so $a$ and $b$ are loops). In this case  each feasible set of $D$ contains $a$ but not $b$, and it follows that $D=D_{ab}$. Since $D$ is binary, so is $D_{ab}$.

\noindent \underline{Case 4:}  Suppose that $a\notin A$ and $b\in A$.
If there is some $F\in \F(D\ast A)$ with $a\notin F$ and $b\in F$, then, by Lemma~\ref{l.rep},  $D\ast(A\btu F)= D(N)$, for some   symmetric  matrix $N$ over $GF(2)$. Since $ a,b\notin  A\btu F$, Case 1 now  applies and so $D_{ab}$ is binary.

If there is some $F\in \F(D\ast A)$ with $a\in F$ and $b\notin F$, then  $D\ast(A\btu F)= D(N)$. Since $ a,b \in  A\btu F$, Case 2 now applies and so $D_{ab}$ is binary.

If there is some $F\in \F(D\ast A)$ with $a\in F$ and $b\in F$, then  $D\ast(A\btu F)= D(N)$. Since $ a\in  A\btu F$ and  $b\notin A\btu F$, Case 3 now applies and so $D_{ab}$ is binary.

All that remains is the case in which  no feasible set of $D\ast A$ contains $a$ or $b$ (so $a$ and $b$ are loops). In this case we can write
$\F (D\ast A) = \left\{ X_i\right\}_{i\in \mathcal{I}}  $, 
 where  $X_i\subseteq E\ba \{a,b\}$ and   $\mathcal{I}$ is an indexing set.
From this we see that
$\F (D_{ab} \ast A) = \left\{ \hat{X}_i\cup \{a, b\}, \hat{X}_i \right\}_{i\in \mathcal{I}} $, 
and that $D_{ab} \ast A = (D\ast A)  \oplus D_{0,1,0} $. Since both $D\ast A$ and $D_{0,1,0}$ are binary, it follows that $D_{ab}$ is. This completes the proof of the theorem. 
\end{proof}

Since handle slides preserve the maximum (and minimum) sizes of a feasible set in a delta-matroid, we have the following.
\begin{corollary}
The set of binary matroids is closed under slides.
\end{corollary}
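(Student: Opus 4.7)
The plan is to read this corollary as a direct consequence of Theorem~\ref{th.cl} together with the observation immediately preceding the corollary, namely that handle slides preserve the maximum and minimum cardinalities of a feasible set. Recall that a matroid is exactly a delta-matroid whose feasible sets are equicardinal, i.e., one for which the maximum and minimum feasible set sizes coincide. So given a binary matroid $M$ of rank $r$, I would proceed by applying a single handle slide and verifying that both the delta-matroid class and the equicardinality property are preserved; the general case then follows by induction on the length of the sliding sequence.

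First, Theorem~\ref{th.cl} guarantees that $M_{ab}$ is a binary delta-matroid, so it remains only to show that it is a matroid, i.e., that every feasible set of $M_{ab}$ has the same cardinality. Here the observation preceding the corollary does the work: since $M$ has $\max|\F(M)| = \min|\F(M)| = r$, preservation of both extrema under the slide yields $\max|\F(M_{ab})| = \min|\F(M_{ab})| = r$, forcing every feasible set of $M_{ab}$ to have size exactly $r$.

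I would justify the preservation observation from Definition~\ref{d1} by a short inspection. Any set newly introduced into $\F_{ab}$ has the form $X\cup a$ with $X\cup b\in\F$ and $X\subseteq E\ba\{a,b\}$, and hence has the same cardinality as $X\cup b$; so no cardinality appears in $\F_{ab}$ that does not already appear in $\F$. To see that the maximum is not lost, pick a feasible set $F\in\F$ of maximum size and check the three cases $a\notin F$; $a,b\in F$; $a\in F, b\notin F$: in each case either $F$ itself lies in $\F_{ab}$, or else the companion set $(F\ba\{a\})\cup b$ has the same size and does. The argument for the minimum is entirely symmetric.

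Combining these steps, $M_{ab}$ is both binary (by Theorem~\ref{th.cl}) and equicardinal (by the preservation observation), hence a binary matroid; iterating over a sequence of slides closes the class. The only mildly delicate point is the case analysis verifying preservation of the extremes — the symmetric difference in Definition~\ref{d1} a priori could delete every maximum-size set, but the case analysis just sketched rules this out, and this is the only place where one does any real work.
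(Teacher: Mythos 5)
Your proposal is correct and follows exactly the paper's route: Theorem~\ref{th.cl} gives that $M_{ab}$ is a binary delta-matroid, and the preservation of the maximum and minimum feasible-set sizes under a handle slide (which the paper states as a one-line remark and you verify in detail from Definition~\ref{d1}) forces equicardinality, hence $M_{ab}$ is a binary matroid. The only difference is that you spell out the case analysis that the paper leaves implicit, and your argument there is sound.
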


The problem  of   extending Theorem~\ref{t.1} to all binary delta-matroids now arises.  
One way to try to extend the Theorem  is  to augment the set of canonical forms to include delta-matroids $D_{i,j,k,l}$ consisting of the direct sum of  $D_{i,j,k}$ with $l$ copies  of delta-matroids isomorphic to $(\{e\}, \{\{e\}\})$. We conjecture that a version of Theorem~\ref{t.1} holds for all binary delta-matroids with these terminal forms. 
\begin{conjecture}\label{conj}
For each binary delta-matroid $D$, there is a sequence of handle slides taking $D$ to some  $D_{i,j,k,l}$ where $i$ is the size of the ground set minus the size of a largest feasible set, $l$ is the size of a smallest feasible set, $2j+k$ is difference in the sizes of a largest and a smallest feasible set. Moreover, $k=0$ if and only if $D$ is even, and if $D$ is odd then every value of $j$ from 0 to $ \lfloor \frac{w}{2} \rfloor$, where $w$ is the difference between the sizes of a largest and a smallest feasible set, can be attained.
\end{conjecture}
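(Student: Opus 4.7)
The plan is to prove Conjecture~\ref{conj} in two stages. First, I would verify that $i$, $l$, $2j+k$ and the parity of $D$ are handle-slide invariants: a handle slide $D_{ab}$ adds and removes pairs $(Y\cup a, Y\cup b)$ of the same cardinality, so it preserves the multiset of sizes of feasible sets, and therefore $l$, $|F_{\max}|$, $i=|E|-|F_{\max}|$, $w=|F_{\max}|-l$, and (since $|Y\cup a|\equiv |Y\cup b|\pmod 2$) the parity of $D$. This gives the uniqueness of $(i,l,2j+k)$ and proves that $k=0$ if and only if $D$ is even.

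For existence, I would induct on $|E|$. The base case $|E|\le 1$ is immediate. If $D$ has a loop $e$ then $D=(D\setminus e)\oplus(\{e\},\{\emptyset\})$, and since the class of binary delta-matroids is minor-closed (a fact used in the proof of Theorem~\ref{th.cl}), the inductive hypothesis applied to $D\setminus e$ yields handle slides taking $D$ to some $D_{i+1,j,k,l}$; coloops are handled symmetrically. So we are reduced to the case where $D$ has neither loops nor coloops. If $l=0$ then Theorem~\ref{t.1} finishes; otherwise $l>0$ and we must use handle slides to produce a loop or coloop.

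To do this, let $F$ be a smallest feasible set and, via Lemma~\ref{l.rep}, write $D\ast F=D(M)$ for a symmetric matrix $M$ over $GF(2)$. The smallness of $F$ forces $M[F]=0$: feasibility of $\{a\}$ in $D(M)$ for $a\in F$ would give $F\setminus\{a\}\in\F(D)$ of size $l-1$, contradicting the minimality of $|F|$, and a similar argument for pairs $\{a,b\}\subseteq F$ kills the off-diagonal entries. Hence
\[
M = \begin{bmatrix} M' & Y \\ Y^T & 0 \end{bmatrix},
\]
and because $D$ has no coloops every column of $Y$ is non-zero. By Equations~\eqref{e.cl1} and~\eqref{e.cl4}, handle slides within $E\setminus F$ perform row operations on $Y$ (together with a simultaneous congruence of $M'$), and handle slides within $F$ perform column operations on $Y$. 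If $\operatorname{rank}(Y) < l$, the columns of $Y$ are linearly dependent, so a suitable column operation zeros out a column and the corresponding element of $F$ becomes a coloop of the new delta-matroid; we then apply the coloop case of the induction. If $\operatorname{rank}(Y)=l$, I would reduce $Y$ to $\bigl[\begin{smallmatrix}I_l\\ 0\end{smallmatrix}\bigr]$ and then perform a mixed handle slide $D_{fe}$ with $Y_{ef}=1$ in order to create a loop.

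The main obstacle is the rigorous verification of this last step: that, when $\operatorname{rank}(Y)=l$ and $Y$ is in reduced form, a mixed handle slide creates a loop or coloop. The simplest prototype is $D=(\{a,b\},\{\{a\},\{b\}\})$ with $F=\{a\}$ and $Y=[1]$, where $D_{ab}$ turns $a$ into a loop; the pattern persists for $D=(\{a,b,c,d\},\{\{a,c\},\{a,d\},\{b,c\},\{b,d\}\})$ with $F=\{a,c\}$ and $Y=I_2$, where $D_{ab}$ again turns $a$ into a loop. In general one must track how the feasible sets of $D$ transform under a mixed slide in terms of the entries of $M$, $Y$, and $M'$. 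Once a loop or coloop has been produced we return to the loop or coloop case of the induction. Finally, the clause that every $j\in[0,\lfloor w/2\rfloor]$ is attainable when $D$ is odd follows as in the proof of Theorem~\ref{t.1}: the three slides $a$ over $c$, $c$ over $b$, $b$ over $a$ convert a block $\bigl[\begin{smallmatrix}0 & 1 & 0\\ 1 & 0 & 0\\ 0 & 0 & 1\end{smallmatrix}\bigr]$ into the $3\times 3$ identity and back, interchanging one interlaced pair plus one non-orientable loop with three non-orientable loops, so iterating within the canonical form $D_{i,j,k,l}$ shifts $(j,k)\to(j\pm 1, k\mp 2)$ while leaving the $l$ coloops unchanged.
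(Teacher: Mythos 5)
First, a point of context: the statement you are proving is stated in the paper only as Conjecture~\ref{conj} --- the authors give no proof for general binary delta-matroids, only a sketch for the ribbon-graphic subclass (isolating a degree-one vertex of a ribbon graph by sliding edges over a non-loop edge). So there is no paper proof to match; yours is a genuinely new, matrix-theoretic route. Most of it is sound: the invariance of $l$, of the maximum feasible size, and of parity is correct (though note a slide preserves the \emph{set} of sizes of feasible sets, not the multiset, since $X\cup a$ can be deleted outright when both $X\cup a$ and $X\cup b$ are feasible); splitting off loops and coloops needs only minor-closure and compatibility of slides with direct sums; and the block form $M=\begin{bmatrix} M' & Y\\ Y^T & 0\end{bmatrix}$ with $M[F]=0$ for a minimum feasible set $F$, together with the realisation of row operations on $Y$ by slides in $E\ba F$ (Equation~\eqref{e.cl1} plus Theorem~\ref{con2}) and column operations by slides in $F$ (Equation~\eqref{e.cl4}), is correct.

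The genuine gap is exactly where you place it: the claim that when $\operatorname{rank}(Y)=l$ a mixed slide produces a loop or coloop. This is the decisive step --- it is precisely the obstruction the authors identify in their $U_{2,4}$ discussion --- and two worked examples do not establish it; moreover there is no matrix formula for a mixed slide (the paper handles mixed slides in Cases 3--4 of Theorem~\ref{th.cl} only by ad hoc case analysis), so it must be argued directly on feasible sets. Fortunately the claim is true; here is the missing argument. Suppose column $e$ of $M$ ($e\in F$) has its unique $1$ in row $f$ ($f\notin F$). Cofactor expansion along row $e$ and then column $e$ shows that for $e\in A$, $M[A]$ is non-singular if and only if $f\in A$ and $M[A\ba\{e,f\}]$ is non-singular. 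Passing through the twist by $F$, every feasible set of $D$ omitting $e$ contains $f$, and the families $P=\{X\subseteq E\ba\{e,f\} : X\cup e\in\F(D)\}$ and $Q=\{X : X\cup f\in\F(D)\}$ are both equal to $\{B\btu(F\ba e) : B\subseteq E\ba\{e,f\},\ M[B]\ \text{non-singular}\}$. Hence sliding $e$ over $f$ toggles off exactly the feasible sets containing $e$ but not $f$, leaving $\F(D_{ef})$ equal to the (nonempty, as $(F\ba e)\cup f\in\F(D)$) family of feasible sets of $D$ containing $f$; so $f$ is a coloop of $D_{ef}$ and your induction closes. It creates a coloop, not a loop: your prototypes have $R=\{X: X\cup\{e,f\}\in\F(D)\}=\emptyset$, which is why a loop also appears there.

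Separately, the final clause of the conjecture is false as literally stated, and your assertion that it ``follows as in the proof of Theorem~\ref{t.1}'' is where this surfaces: if $D$ is odd and $w$ is even, then $j=w/2$ forces $k=0$, which is impossible since slides preserve parity and $D_{i,j,0,l}$ is even. For instance $\F=\{\emptyset,\{a\},\{a,b\}\}$ reaches $D_{0,0,2}$ but can never reach $D_{0,1,0}$. The attainable range in the odd case is $0\le j\le\lfloor (w-1)/2\rfloor$; the same off-by-one already occurs in Corollary~\ref{c.2} of the paper.
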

Conjecture~\ref{conj} is true for ribbon graphic delta-matroids. This can be proven by induction on the size of a smallest feasible set. The base case is Theorem~\ref{t.1}. For the inductive step take a ribbon graph $G$ such that $D=D(G)$; choose any non-loop edge $e=(u,v)$ of $G$; handle slide each edge, other than $e$, that is incident with $u$ over $e$ so that $u$ becomes a degree 1 vertex; the corresponding handle slides in $D(G)$ transform it into a ribbon graphic delta-matroid with a direct summand $(\{e\}, \{\{e\}\})$. The main step in this argument is that any non-loop element $e$ of a ribbon graphic delta-matroid can be transformed into a coloop using only handle slides over $e$. This result does not hold for delta-matroids in general. For example, it is readily checked that  in the uniform matroid $U_{2,4}$ no element $e$ can be transformed into a coloop using only handle slides over $e$. In fact, with a little more work, it can be checked that no sequence of handle slides applied to $U_{2,4}$ will create a coloop. Of course the (delta-)matroid $U_{2,4}$ is not binary and so this example says nothing about the validity of Conjecture~\ref{conj}. However it does indicate that any approach to isolating $(\{e\}, \{\{e\}\})$ for the conjecture will be intimately tied to the binary structure of the delta-matroid.

It is perhaps also worth commenting on the alternative approach of considering sequences of twists, and handle slides that can only act on delta-matroids  in which the empty set is feasible, rather than just sequences of handle slides. Such an extension results in non-unique terminal forms.  For example if $D=(\{e,f\}, \{ \emptyset, \{e\}, \{e,f\} \})$ then there is a sequence of handle slides taking  $D$ to $D_{0,0,2}$, but also there is a sequence of handle slides taking $D\ast e$ to $D_{1,0,1}$. 
 In fact,  ribbon graph theory indicates that this approach should fail. The ribbon graph  analogue is to consider ribbon graphs up to partial duals (see \cite{Ch09,CMNR1}), and handle slides that can only act on bouquets. But  partial duality changes the topology of a surface, and so our choice of terminal form will need to reflect this. Nevertheless, this relation on binary delta-matroids will result in some set of terminal forms. What are they? 

We conclude with one final open question. We have seen that binary delta-matroids are closed under handle slides, but that delta-matroids, in general, are not. What classes of delta-matroids are closed under handle slides?

\section*{Acknowledgements}
This work arose from a visit supported by a Scheme 5 - International Short Visits grant from the London Mathematical Society. We would like to thank the LMS for their support. We would also like the thank Robert Brijder for his  helpful comments and suggestions.

  \bibliographystyle{plain} 
    \bibliography{bib}

\end{document}